\newtheorem{thm}{Theorem}[section]
\newtheorem{theorem}[thm]{Theorem}
\newtheorem{corollary}[thm]{Corollary}
\newtheorem{proposition}[thm]{Proposition}
\newtheorem{problem}[thm]{Problem}
\newtheorem{lemma}[thm]{Lemma}
\newenvironment{remark}
  {{\parindent=0pt {\bf Remark.}}}
   {\vspace{0.1cm}}
\begin{document}

\title{Shortcut sets for the locus of plane Euclidean networks}

\author{\normalfont\fontsize{12}{14}\selectfont
Jos\'e C\'aceres$^1$\and
Delia Garijo$^2$\and
Antonio Gonz\'alez$^2$\and
Alberto M\'arquez$^3$\and \\
Mar{\'i}a Luz Puertas$^1$\and \\
Paula Ribeiro$^4$}

\addtocounter{footnote}{1} \footnotetext{Departamento de Matem\'aticas, Universidad de Almer\'{\i}a, Spain. Email addresses:\texttt{\{jcaceres,mpuertas\}@ual.es}. Partially supported by project MTM2014-60127-P.}
\addtocounter{footnote}{1} \footnotetext{Departamento de Matem\'atica Aplicada I, Universidad de Sevilla, Spain. Email addresses: \texttt{\{dgarijo,gonzalezh\}@us.es}. Partially supported by project MTM2014-60127-P.}
\addtocounter{footnote}{1} \footnotetext{Departamento de Matem\'atica Aplicada I, Universidad de Sevilla, Spain. Email address: \texttt{almar@us.es}.}
\addtocounter{footnote}{1} \footnotetext{Instituto Superior de Engenharia, Universidade do Algarve, Faro, Portugal. Email address: \texttt{pribeiro@ualg.pt}.}
\maketitle

\begin{abstract}
We study the problem of augmenting the locus $\mathcal{N}_{\ell}$ of a plane Euclidean network $\mathcal{N}$ by inserting iteratively a finite set of segments, called \emph{shortcut set}, while reducing the diameter of the locus of the resulting network. There are two main differences with the classical augmentation problems: the endpoints of the segments are allowed to be points of $\mathcal{N}_{\ell}$ as well as points of the previously inserted segments (instead of only vertices of $\mathcal{N}$), and the notion of diameter is adapted to the fact that we deal with $\mathcal{N}_{\ell}$ instead of $\mathcal{N}$. This  increases enormously the hardness of the problem but also  its possible practical applications to network design. Among other results, we characterize the existence of shortcut sets, compute them in polynomial time, and analyze the role of the convex hull of $\mathcal{N}_{\ell}$ when inserting a shortcut set. Our main results prove that, while the problem of minimizing the size of a shortcut set is NP-hard, one can always determine in polynomial time whether inserting only one segment suffices to reduce the diameter.
\end{abstract}

\section{Introduction}

A \emph{geometric network} of points in the plane is an undirected graph whose vertices are points in $\mathbb{R}^2$ and whose edges are straight-line segments connecting pairs of points. When edges are assigned lengths equal to the Euclidean distance between their endpoints, the  geometric network is called \emph{Euclidean network}. These networks are the objects of study in this paper; concretely, we deal with \emph{plane} Euclidean networks in which there are no crossings between edges. For simplicity and when no confusion may arise, we shall simply say network, it being understood as plane Euclidean network and, unless otherwise stated, networks are assumed to be connected.

 It is natural to distinguish between a network $\mathcal{N}$ and its locus, denoted by  $\mathcal{N}_{\ell}$, where one is considering not only the vertices of  $\mathcal{N}$ but the set of all points of the Euclidean plane that are on $\mathcal{N}$ (thus, $\mathcal{N}_{\ell}$ is treated indistinctly as a network or as a closed point set). Roughly speaking, in order to compute the \emph{distance} between two vertices of $\mathcal{N}$ one has to sum the lengths of the edges of a shortest path connecting them, and the \emph{diameter} of $\mathcal{N}$ is the maximum among the distances between any two vertices. This concept is naturally extended to $\mathcal{N}_{\ell}$; the difference is that now the distances that must be computed are not only between vertices but also between two arbitrary points on edges of $\mathcal{N}$. In this setting, a shortest path connecting two points of  $\mathcal{N}_{\ell}$ may consist of a number of edges and one or two fragments of edges (in which the points to be connected are located). When computing the distance, we sum the lengths of all the edges in the path and the lengths of the fragments\footnote{The diameter of $\mathcal{N}_{\ell}$ is the \emph{generalized diameter} of $\mathcal{N}$, which was introduced in \cite{CG82} and called \emph{continuous diameter} in \cite{DCGMS15,HLN91}, but we use the context of \emph{locus} because it fits better to our purpose in this paper.}. See Figure 1.
 
\begin{figure}[!hbt]
  \centering
  \includegraphics[width=7cm]{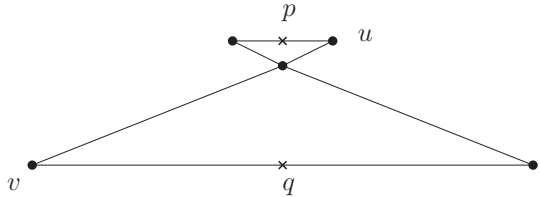}
  \caption{The distance between $u$ and $v$ is the diameter of the network, but the diameter of its locus is the distance between $p$ and $q$.}
\label{fig:delia1}
\end{figure}

In this paper we  study the following problem:

\begin{problem}\label{ourproblem} Given a plane Euclidean network $\mathcal{N}$, insert a finite set of segments $\mathcal{S}=\{s_1, \ldots, s_k\}$ in order to reduce (or minimize) the diameter of the locus of the resulting network, provided that the endpoints of segment $s_1$ are on  $\mathcal{N}_{\ell}$ and the endpoints of $s_i$, $2\leq i\leq k$, are on $\mathcal{N}_{\ell}\cup \{s_1,\ldots, s_{i-1}\}$. (We say that $\mathcal{S}$ is a \emph{shortcut set} for $\mathcal{N}_{\ell}$.)\end{problem}

This type of problems is mainly motivated by urban network design: to introduce shortcut sets is a way of improving the worst-case travel time along networks of roads in a city, highways, etc. Note that such models are considering the locus of the network, which is also used for related applications to location analysis \cite{BDCDGMSS13} and feed-link problems \cite{ABBJJKLLSS11,BDCDGMSS13,SS15}. One can find many other applications of Euclidean networks to robotics, telecommunications networks, computer networks, and flight scheduling. Any collection of objects that have some connections between them can be modeled as a geometric network. See \cite{ABCGHSV08,FPZW04,L03} for more details and references.

Problem \ref{ourproblem} also belongs to the class of \emph{graph augmentation} problems. Concretely, it is a variant of the Diameter-Optimal $k$-Augmentation Problem for edge-weighted geometric graphs, where one has to insert $k$ additional edges (i.e., a shortcut set of size $k$ with segments connecting vertices) to an edge-weighted plane geometric graph  in order to minimize the diameter of the resulting graph. Whilst there are numerous studies on the non-geometric version of this problem (see for instance \cite{EGR98,FGGM14,GGKSS2015}), when the input is an edge-weighted plane geometric graph, there are very few results  not only on this problem but on graph augmentation in general (see the survey \cite{HT13} for results and references on graph augmentation over plane geometric graphs). Farshi et al.~\cite{FGG05} designed approximation algorithms for the problem of adding one edge to a plane Euclidean network in $\mathbb{R}^d$ while minimizing the dilation. The same problem is considered in \cite{LW08,W10} but for networks embedded in a metric space. %To the best of our knowledge, there is no work known for the case of adding $k>1$ edges.

When considering the locus  $\mathcal{N}_{\ell}$ instead of the network $\mathcal{N}$ (setting of Problem \ref{ourproblem}), the hardness of the problem is enormously increased which motivates that, to the best of our knowledge, there are only two previous papers on this topic, both restricted to specific families of graphs. Yang \cite{yang} studied the restriction of Problem \ref{ourproblem} to inserting only one segment, called  \emph{shortcut}, to the locus of a simple polygonal chain\footnote{In fact, the restriction of Problem \ref{ourproblem} to inserting only one segment to the locus of a general plane Euclidean network was proposed by Cai \cite{C} as a personal communication to Yang \cite{yang}.}. Among other results, he designed three different approximation algorithms to compute an \emph{optimal} shortcut, i.e., a shortcut that minimizes the diameter among all shortcuts. On the other hand, in a very recent paper, De Carufel et al.~\cite{DCGMS15} consider the possibility of inserting more than one segment (all of them called shortcuts) to the locus of a Euclidean network $\mathcal{N}$ to minimize the diameter. They define a shortcut  as a segment with endpoints always on $\mathcal{N}_{\ell}$ and smaller length than the distance in $\mathcal{N}_{\ell}$ between its endpoints. Nevertheless,
 their sets of shortcuts do not necessarily decrease the diameter of the resulting network.
 There are more differences with our definition of shortcut set: their shortcuts only intersect $\mathcal{N}_{\ell}$ on its endpoints, and the possible intersection points between their shortcuts (when inserting more than one) are not considered as points of the resulting network. With this notion, they develop a study on paths and cycles determining in linear time optimal shortcuts for paths and  optimal pairs of shortcuts for convex cycles.

Our main contribution in this paper is to provide the first approach to Problem \ref{ourproblem} for general plane Euclidean networks.
Concretely, in Section~\ref{sec:upper}, we first characterize the networks $\mathcal{N}$ whose locus $\mathcal{N}_{\ell}$ admits a shortcut set. For such networks, we find in polynomial time a shortcut set for $\mathcal{N}_{\ell}$ giving an upper bound on its size. The section concludes by studying the connection between the diameter of the convex hull of $\mathcal{N}_{\ell}$ and the diameter of the resulting network when inserting a shortcut set to $\mathcal{N}_{\ell}$.

%, in terms of the dilation of the vertices of $\mathcal{N}$ and, equivalently, via the convex hull of  $\mathcal{N}_{\ell}$. For those networks $\mathcal{N}$, we then find in polynomial time a shortcut set for $\mathcal{N}_{\ell}$ and give an upper bound on its size. We conclude the section by studying the connection between the diameter of the convex hull of $\mathcal{N}_{\ell}$ and the diameter of the resulting network when inserting a shortcut set to $\mathcal{N}_{\ell}$. %In particular we show that, by inserting segments, the diameter cannot be reduced more than that of the convex hull of $G_{\ell}$.

Section~\ref{sec:scn1} is devoted to prove that it is always possible to determine in polynomial time whether inserting only one segment to $\mathcal{N}_{\ell}$ suffices to reduce the diameter. Our method computes such a segment in case of existence, and combines a remarkable number of tools
that allow us to decide on the existence of that segment via analyzing intersections of certain arrangements of curves and certain regions. %In the worst case, we must analyze $O(n^4)$ intersections and one of the main difficulties relies on proving that the curves are convex (in fact, for some cases they are shown to be conics), which is the key property to deal with those intersections.

Section 4 contains our first approach to two hard problems: to minimize the size of shortcut sets and to minimize the diameter of the resulting networks when inserting shortcut sets. We consider shortcut sets for non-connected networks, which is essential for proving our main result in this section: the NP-completeness of the problem of deciding whether the minimum size of a shortcut set (for $\mathcal{N}_{\ell}$ not necessarily connected) is smaller or equal than a fixed natural number.

 We conclude the paper in Section~\ref{sec:CCRR} with some comments and open problems.

\section{Existence of shortcut sets}\label{sec:upper}

We begin with some notations and definitions. As usual, $V(\mathcal{N})$ and $E(\mathcal{N})$ denote, respectively, the vertex-set and the edge-set of a network $\mathcal{N}$, and $\delta(u)$ is the {\em degree} of $u\in V(\mathcal{N})$. An edge $uv\in E(\mathcal{N})$ is {\em pendant} if either $u$ or $v$ is a {\em pendant} vertex (i.e., has degree 1). We write $p\in \mathcal{N}_{\ell}$ for a point $p$ on $\mathcal{N}_{\ell}$, and distinguish the vertices of $\mathcal{N}$ (which are not assumed to be in general position) saying that $V(\mathcal{N})\subseteq \mathcal{N}_{\ell}$.

For $p,q\in \mathcal{N}_{\ell}$, a \emph{$p$-$q$ path} $P$ is a sequence $pu_1\dots u_kq$ such that $u_1u_2,\dots,u_{k-1}u_k\in E(\mathcal{N})$, $p$ is a point on an edge ($\neq u_1u_2$) incident with $u_1$, and $q$ is a point on an edge ($\neq u_{k-1}u_k$) incident with $u_k$. The \emph{length} of $P$, written as $|P|$, is the sum of the lengths of all edges $u_iu_{i+1}$ plus the lengths of the segments $pu_1$ and $qu_k$ (which are edges when $p,q\in V(\mathcal{N})$).
The {\em distance} $d(p,q)$ between points $p,q\in \mathcal{N}_{\ell}$ is the length of a shortest $p$-$q$ path in $\mathcal{N}_{\ell}$. The \emph{eccentricity} of a point $p\in \mathcal{N}_{\ell}$ is ecc$(p)=\max_{q \in \mathcal{N}_{\ell}} d(p,q)$ and the {\em  diameter} of $\mathcal{N}_{\ell}$ is $ {\rm diam}(\mathcal{N}_{\ell}) = \max_{p\in \mathcal{N}_{\ell}} {\rm ecc}(p)$.
Two points $p,q \in \mathcal{N}_{\ell}$ are {\em diametral} whenever $d(p,q)={\rm diam}(\mathcal{N}_{\ell})$; when in addition $p$ and $q$ are vertices, they are called {\em diametral vertices} of $\mathcal{N}_{\ell}$. These definitions for $\mathcal{N}$ (instead of $\mathcal{N}_{\ell}$) are analogous (and well-known), taking distances only between vertices of $\mathcal{N}$. When necessary, we shall use $d_H$, ecc$_H$ and $d_e$ for, respectively, distance on a network $H$, eccentricity on $H$, and Euclidean distance.

As we set in Problem \ref{ourproblem}, we define a {\em shortcut set} for the locus $\mathcal{N}_{\ell}$ of a network $\mathcal{N}$ as a finite set ${\cal S}=\{s_1,\dots, s_k\}$ of segments where $s_1$ has endpoints on $\mathcal{N}_{\ell}$ and $s_i$, $2\leq i\leq k$, has endpoints on $\mathcal{N}_{\ell}\cup \{s_1,\ldots, s_{i-1}\}$ satisfying that
$ {\rm diam}(\mathcal{N}_{\ell} \cup  {\cal S})<{\rm diam}(\mathcal{N}_{\ell})$; see Figure \ref{fig:delia2}(a). When ${\cal S}$ consists of only one segment $s$ we say that $s$ is a \emph{shortcut} (in this case, $\mathcal{N}_{\ell}  \cup {\cal S}$ is simply $\mathcal{N}_{\ell} \cup s$),  and a shortcut $s$ is called {\em simple} if its two endpoints are the only intersection points with $\mathcal{N}_{\ell}$; see Figure \ref{fig:delia2}(b). We want to point out that our definition of shortcut comes from that given in \cite{yang}, which includes as a possibility the equality of the diameters of $\mathcal{N}_{\ell}$ and $\mathcal{N}_{\ell}\cup s$ (and so the locus of every network has a shortcut), but we believe that the intuitive idea of shortcut is better captured by excluding that case. As it was explained in the Introduction, our definition also differs from the shortcuts of \cite{DCGMS15}.

\begin{figure}[ht]
\begin{center}
\begin{tabular}{cccccccc}
\includegraphics[width=0.25\textwidth]{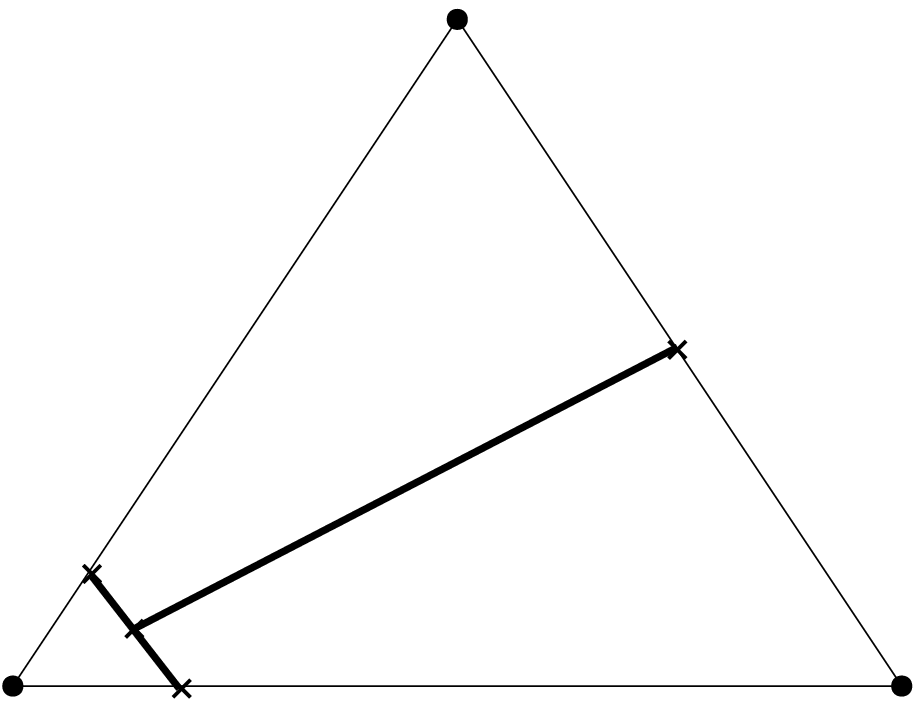}
&&&&&&&
\includegraphics[width=0.25\textwidth]{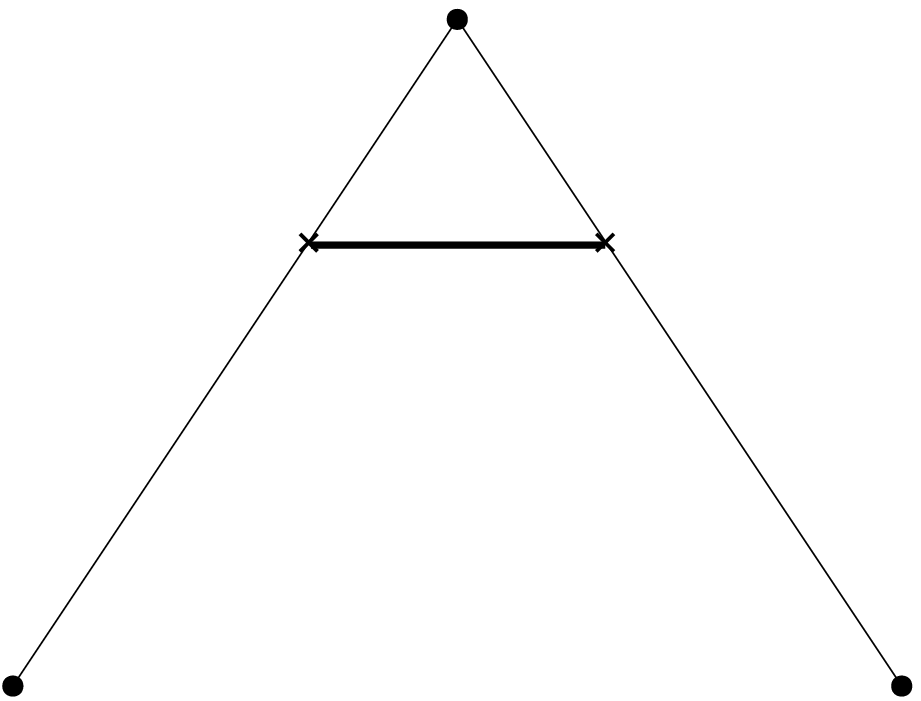}\\
(a)&&&&&&&(b)
\end{tabular}
\end{center}
\caption{Depicted as thick segments: (a) A shortcut set of size 2, (b) a simple shortcut.}\label{fig:delia2}
\end{figure}

One can easily find networks $\mathcal{N}$ whose locus $\mathcal{N}_{\ell}$ has no shortcut (a triangle, for example) and even no shortcut set of any size (a straight path). The characterization of the networks $\mathcal{N}$ whose locus $\mathcal{N}_{\ell}$ admits a shortcut set is given in Theorem  \ref{characterization} below. We use $CH(\mathcal{N}_{\ell})$ to denote the convex hull of $\mathcal{N}_{\ell}$; note that $CH(\mathcal{N}_{\ell})=CH(V(\mathcal{N}))$. Observe also that the distance between any two points in  $CH(\mathcal{N}_{\ell})$ is their Euclidean distance which easily leads to the fact that  ${\rm diam}(CH(\mathcal{N}_{\ell}))\leq {\rm diam}(\mathcal{N}_{\ell})$.

\begin{theorem}\label{characterization}
 Let $\mathcal{N}$ be a plane Euclidean network. Then, the following statements are equivalent:
\begin{enumerate}
\item[(i)] $\mathcal{N}_{\ell}$ admits a shortcut set.
\item[(ii)] The segment defined by any two diametral vertices is not contained in $\mathcal{N}_{\ell}$.
\item[(iii)] ${\rm diam}(CH(\mathcal{N}_{\ell}))<{\rm diam}(\mathcal{N}_{\ell})$.
\end{enumerate}
\end{theorem}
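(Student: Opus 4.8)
The plan is to establish the cycle of implications $(i)\Rightarrow(ii)\Rightarrow(iii)\Rightarrow(i)$, since the last implication is the one that really delivers a shortcut set and the first two are contrapositive-style observations.

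For $(i)\Rightarrow(ii)$, I would argue by contraposition. Suppose the segment $s$ joining two diametral vertices $u,v$ of $\mathcal{N}_\ell$ is contained in $\mathcal{N}_\ell$. The key point is that then $d(u,v)=d_e(u,v)$, so the diameter of $\mathcal{N}_\ell$ is already realized by a Euclidean segment lying in the network; inserting any finite segment set $\mathcal{S}$ cannot shorten a path that is already a straight line, hence $d_{\mathcal{N}_\ell\cup\mathcal{S}}(u,v)=d(u,v)={\rm diam}(\mathcal{N}_\ell)$ and so ${\rm diam}(\mathcal{N}_\ell\cup\mathcal{S})\ge{\rm diam}(\mathcal{N}_\ell)$; no shortcut set exists. (One must note that adding segments never increases distances, so the diameter can only stay the same or drop, and here it stays.)

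For $(ii)\Rightarrow(iii)$: assume ${\rm diam}(CH(\mathcal{N}_\ell))={\rm diam}(\mathcal{N}_\ell)$ (recall from the remarks before the theorem that $\le$ always holds). The diameter of a convex polygon $CH(\mathcal{N}_\ell)=CH(V(\mathcal{N}))$ is attained at a pair of its vertices, which are vertices of $\mathcal{N}$; call them $u,v$. Then $d_e(u,v)={\rm diam}(CH(\mathcal{N}_\ell))={\rm diam}(\mathcal{N}_\ell)\ge d(u,v)\ge d_e(u,v)$, forcing $d(u,v)=d_e(u,v)$. So a shortest $u$-$v$ path in $\mathcal{N}_\ell$ has length equal to the Euclidean distance, which is only possible if that path is the straight segment $\overline{uv}$; hence $\overline{uv}\subseteq\mathcal{N}_\ell$, and since $u,v$ are then also diametral vertices of $\mathcal{N}_\ell$, statement $(ii)$ fails. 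The main thing to be careful about here is the step "shortest path of Euclidean length $\Rightarrow$ it is the segment"; this uses the triangle inequality together with the fact that the path is a polygonal arc of edges/fragments of $\mathcal{N}$, so any deviation from the straight segment strictly increases length.

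For $(iii)\Rightarrow(i)$, which I expect to be the main obstacle, I would construct an explicit shortcut set. The idea is to exploit that $CH(\mathcal{N}_\ell)$ has strictly smaller diameter than $\mathcal{N}_\ell$: informally, "filling in" enough chords of the convex hull would make the network's locus metrically behave like the convex region, whose diameter is already small enough. Concretely, I would take a suitably fine set of segments whose union with $\mathcal{N}_\ell$ guarantees that for every pair of points $p,q$ the new distance is at most ${\rm diam}(CH(\mathcal{N}_\ell))+\varepsilon$ for some $\varepsilon$ small enough that this quantity is still strictly below ${\rm diam}(\mathcal{N}_\ell)$ — this is where the strict inequality in $(iii)$ is essential. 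Since this is the constructive heart of the section (the excerpt says the authors "find in polynomial time a shortcut set … giving an upper bound on its size"), I would expect the clean way to do it is to reduce to a finite configuration: triangulate $CH(V(\mathcal{N}))$ (or take all segments between vertices of $\mathcal{N}$ that lie inside the hull), add these as $s_1,\dots,s_m$ respecting the iterative endpoint condition, and then argue that in $\mathcal{N}_\ell\cup\mathcal{S}$ any two points are connected by a path that first reaches a nearby vertex and then travels inside the filled convex region essentially along a straight line, giving total length $<{\rm diam}(\mathcal{N}_\ell)$. The delicate points will be (a) bounding the detour from an arbitrary point $p\in\mathcal{N}_\ell$ to the convex-hull-vertex structure, and (b) checking the iterative "endpoints on previously inserted segments or $\mathcal{N}_\ell$" constraint is met by the chosen segments — but since all chosen segments have endpoints in $V(\mathcal{N})\subseteq\mathcal{N}_\ell$, constraint (b) is automatic, and only (a) needs a genuine (though elementary) geometric estimate.
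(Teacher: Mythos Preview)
Your arguments for $(i)\Rightarrow(ii)$ and $(ii)\Rightarrow(iii)$ are fine and match the paper's. The gap is in $(iii)\Rightarrow(i)$: your proposed construction does not work, and the issue you flagged as ``delicate point (a)'' is in fact fatal.

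Take $\mathcal{N}$ to be a (non-degenerate) triangle. Then ${\rm diam}(CH(\mathcal{N}_\ell))$ is the length of the longest side, while ${\rm diam}(\mathcal{N}_\ell)$ is half the perimeter, so $(iii)$ holds strictly. But every pair of vertices is already joined by an edge, so ``all segments between vertices of $\mathcal{N}$ that lie inside the hull'' is empty, and a triangulation of $CH(V(\mathcal{N}))$ adds nothing either. Your $\mathcal{S}$ is empty and the diameter does not drop. More generally, for any convex polygon your construction adds only diagonals; the midpoints of two opposite edges can still be at distance close to half the perimeter, because reaching a vertex from a midpoint already costs half an edge-length, and there is no control relating edge lengths to the gap ${\rm diam}(\mathcal{N}_\ell)-{\rm diam}(CH(\mathcal{N}_\ell))$. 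So the ``elementary geometric estimate'' you hoped for in (a) does not exist.

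The paper takes a completely different, \emph{local} route, working from $(ii)$ rather than $(iii)$. At every vertex $u$ of degree $\ge 2$ it places one or more very short segments that cross, close to $u$, a group of incident edges spanning an angle $<\pi$; such a segment strictly shortens any path that turns at $u$ between two of the crossed edges. Since (ii) forces every shortest path between diametral points to make a genuine turn at some vertex of degree $\ge 2$, the union of all these little segments shortens every diametral path, and because the segments are arbitrarily short their own eccentricities stay below ${\rm diam}(\mathcal{N}_\ell)$. This is also what yields the explicit size bound $2|E(\mathcal{N})|-n_1$ used in the subsequent corollary, which a convex-hull-filling argument would not give.
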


\begin{proof}
\noindent $\left( i \Longleftrightarrow ii \right)$ If $\mathcal{N}_{\ell}$ contains a segment defined by diametral vertices, say $u$ and $ v$, then $d_e(u,v)=d_{\mathcal{N}_{\ell}}(u,v)={\rm diam}(\mathcal{N}_{\ell})$ and so there is no shortcut set for $\mathcal{N}_{\ell}$. Otherwise, there exists $u\in V(\mathcal{N})$ such that $\delta(u)=r\geq 2$; let  $\{u_0, \ldots ,u_{r-1}\}$ be the set of its neighbours sorted clockwise.

Consider the oriented lines $m_i$ from $u$ to each $u_i$, and the corresponding right half-planes  $H_i^+$ (see Figure~\ref{fig:T1}).
For each $u_i$,  determine whether there exists a vertex $u_j$ such that $u_j\in H_i^+$ and $u_{j+1}\not\in H_i^+$ (where indices are taken modulo $r$). Observe that there must exist such a vertex $u_j$ for at least one of the $u_i$'s since $\mathcal{N}_{\ell}$ cannot be not a straight path, which contains a segment defined by two diametral vertices.

The angle $<u_iuu_j>$ is smaller than $\pi$ and so a segment $s_i$ crossing all edges $uu_k$, $i\leq k\leq j$, shortens all paths that contain any two of those edges  (see Figure~\ref{fig:T1}). Further, all segments $s_i$ must be placed very close to vertex $u$ (their lengths are as small as desired) so that ecc$(p)<{\rm diam}(\mathcal{N}_{\ell})$ for every $p\in \cup s_i$. Thus, the set of all those segments $s_i$ is a shortcut set.

\begin{figure}[!hbt]
  \centering
  \includegraphics[width=5cm]{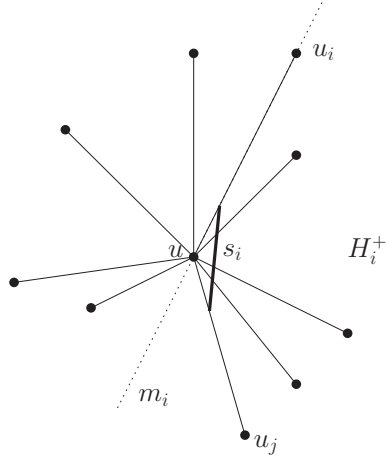}
  \caption{Segment $s_i$ (depicted as a thick segment) that shortens all paths containing any two of the intersected edges.}
  \label{fig:T1}
\end{figure}

\noindent $\left( ii \Longleftrightarrow iii \right)$ As it was said before,  ${\rm diam}(CH(\mathcal{N}_{\ell}))\leq {\rm diam}(\mathcal{N}_{\ell})$. If  ${\rm diam}(\mathcal{N}_{\ell})= {\rm diam}(CH(\mathcal{N}_{\ell}))$ then there exist $u,v\in V(\mathcal{N})$ such that $d_e(u,v)={\rm diam}(CH(\mathcal{N}_{\ell}))={\rm diam}(\mathcal{N}_{\ell})=d_{\mathcal{N}_{\ell}}(u,v)$, which implies that  segment $uv$ is contained in $\mathcal{N}_{\ell}$. The reverse implication can be proved analogously.
%Suppose now that there are diametral vertices $u,v$ of $\mathcal{N}_{\ell}$ with dilation 1. Then,  $${\rm diam}(CH(\mathcal{N}_{\ell}))\leq {\rm diam}(\mathcal{N}_{\ell})=d_{\mathcal{N}_{\ell}}(u,v)=d_e(u,v)\leq {\rm diam}(CH(\mathcal{N}_{\ell})).$$ This yields that ${\rm diam}(\mathcal{N}_{\ell})= {\rm diam}(CH(\mathcal{N}_{\ell}))$.
\end{proof}

The following corollary is a consequence of the preceding proof.

\begin{corollary}\label{bound}
Let $\mathcal{N}$ be a plane Euclidean network whose locus $\mathcal{N}_{\ell}$ admits a shortcut set. Then, it is always possible to compute in polynomial time a shortcut set for $\mathcal{N}_{\ell}$ of size at most $2|E(\mathcal{N})|-n_1$, where $n_1$ is the number of pendant vertices of $\mathcal{N}$.
\end{corollary}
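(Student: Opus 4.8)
The plan is to globalise the construction used in the proof of Theorem~\ref{characterization}: instead of producing segments only around one vertex, I would run it at \emph{every} non-pendant vertex and then count the segments obtained. Assume $\mathcal{N}_\ell$ admits a shortcut set, so that $D:={\rm diam}(\mathcal{N}_\ell)>{\rm diam}(CH(\mathcal{N}_\ell))$ by Theorem~\ref{characterization}(iii). For each vertex $w$ with $\delta(w)=r\ge 2$, list the neighbours $u_0,\dots,u_{r-1}$ clockwise; for every index $i$ let $R_i^w$ be the maximal clockwise run of edges $wu_i,wu_{i+1},\dots,wu_j$ (indices modulo $r$) whose angular spread $\angle u_iwu_j$ is still smaller than $\pi$ (equivalently, the edges $wu_k$ with $u_k$ in the right half-plane $H_i^+$ of the proof of Theorem~\ref{characterization}), and let $s_i^w$ be a segment crossing exactly the edges of $R_i^w$, placed so that all its intersection points with $\mathcal{N}_\ell$ lie within a small distance $\rho_w>0$ of $w$ (discard $s_i^w$ when $R_i^w$ consists of a single edge). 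Let $\mathcal{S}$ be the family of all these segments, listed in any order. Their endpoints are intersection points with edges of $\mathcal{N}$, hence lie on $\mathcal{N}_\ell$, so the nested--endpoint requirement of Problem~\ref{ourproblem} holds trivially. Since at most $\delta(w)$ segments are created at each non-pendant vertex and none at a pendant one, and a connected network on at least two vertices has no isolated vertex,
\[
|\mathcal{S}|\;\le\;\sum_{w:\,\delta(w)\ge 2}\delta(w)\;=\;\sum_{w\in V(\mathcal{N})}\delta(w)\;-\;\sum_{w:\,\delta(w)=1}1\;=\;2|E(\mathcal{N})|-n_1 .
\]

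Next I would verify that $\mathcal{S}$ really is a shortcut set, i.e.\ that ${\rm diam}(\mathcal{N}_\ell\cup\mathcal{S})<D$, exactly along the lines of the proof of Theorem~\ref{characterization}. Adding segments never increases distances, and if $p,q\in\mathcal{N}_\ell$ are diametral then any shortest path from $p$ to $q$ is strictly longer than the segment $pq$ (otherwise $d(p,q)=d_e(p,q)\le {\rm diam}(CH(\mathcal{N}_\ell))<D$), so it turns with interior angle smaller than $\pi$ at some vertex $v$; by the maximality of the runs, some $s_i^v$ crosses, close to $v$, both edges of that path incident with $v$, and detouring along that chord strictly shortens the path, whence $d_{\mathcal{N}_\ell\cup\mathcal{S}}(p,q)<D$. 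As $\mathcal{N}_\ell$ is compact, this already forces $\max_{p,q\in\mathcal{N}_\ell}d_{\mathcal{N}_\ell\cup\mathcal{S}}(p,q)<D$. Finally, by the triangle inequality $|s_i^w|\le 2\rho_w$, so every point of $s_i^w$ is at network distance at most $3\rho_w$ from $w$; choosing the radii $\rho_w$ small enough — and smaller still around the few vertices whose eccentricity is close to $D$ — brings the eccentricity of every point of $\mathcal{S}$ below $D$ as well, and one more use of compactness gives ${\rm diam}(\mathcal{N}_\ell\cup\mathcal{S})<D$. All the data used here (the convex hull and its diameter, the degrees and clockwise orderings, $D={\rm diam}(\mathcal{N}_\ell)$, and the finitely many inequalities the $\rho_w$ must satisfy) can be produced in polynomial time, so $\mathcal{S}$ can.

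The step I expect to be the genuine obstacle is this last calibration of the radii: one has to bound the strictly positive but $O(\rho_v)$ gain obtained by short-cutting a turn at $v$ below the $O(\rho_w)$ detour that a new point near $w$ pays to re-enter the network, which may require the radii to differ from vertex to vertex (very small near vertices of almost maximal eccentricity, comparatively larger elsewhere). The counting, and the reduction of everything else to the proof of Theorem~\ref{characterization}, are routine.
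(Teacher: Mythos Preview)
Your proposal is correct and follows essentially the same route as the paper: the paper's proof is a two-line appeal to the construction in Theorem~\ref{characterization} (implicitly carried out at \emph{every} non-pendant vertex, as the sum $\sum_{\delta(u)\ge 2}\delta(u)=2|E(\mathcal{N})|-n_1$ makes clear), together with the remark that everything is polynomial. Your write-up is in fact more careful than the paper's --- the calibration issue you flag (balancing the $O(\rho_v)$ gain against the $O(\rho_w)$ detour) is real but is swept under ``placed very close to vertex $u$'' in the proof of Theorem~\ref{characterization} as well, so you are not missing anything the paper supplies.
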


\begin{proof}
The shortcut set for $\mathcal{N}_{\ell}$ constructed in the proof of Theorem \ref{characterization} (equivalence of statements (i) and (ii))  has
cardinality at most $$\sum_{u\in V(\mathcal{N}),\delta(u)\geq 2}\delta(u)  =  2|E(\mathcal{N})| - n_1,$$ and can obviously be constructed in polynomial time.
\end{proof}

Regarding the role of $CH(\mathcal{N}_{\ell})$ when inserting segments, one can easily check that a vertex $u\in V(\mathcal{N})$ (with $\delta(u)\geq 2$) on the border of $CH(\mathcal{N}_{\ell})$ requires only one segment for shortening all paths through $u$. This and similar considerations improve the upper bound of the preceding corollary, but we put the emphasis on its linearity with respect to $|V(\mathcal{N})|$. Nevertheless, the importance of $CH(\mathcal{N}_{\ell})$ goes much further as the following theorem reflects.

\begin{theorem}
Let $\mathcal{N}$ be a plane Euclidean network whose locus $\mathcal{N}_{\ell}$ satisfies that: $${\rm diam}(CH(\mathcal{N}_{\ell}))<{\rm diam}(\mathcal{N}_{\ell}).$$ Then,
 for every $\varepsilon>0$ such that ${\rm diam}(CH(\mathcal{N}_{\ell}))+\varepsilon<{\rm diam}(\mathcal{N}_{\ell})$
there exists a shortcut set ${\cal S}$ for $\mathcal{N}_{\ell}$ verifying that: $${\rm diam}(CH(\mathcal{N}_{\ell}))\leq {\rm diam}(\mathcal{N}_{\ell} \cup  {\cal S})<{\rm diam}(CH(\mathcal{N}_{\ell}))+\varepsilon.$$
\end{theorem}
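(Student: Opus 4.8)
The plan is to prove the two inequalities separately; the left one is essentially free and holds for \emph{every} shortcut set. Indeed, an easy induction on the insertion order shows that each $s_i$ is contained in $CH(\mathcal{N}_\ell)$, since its endpoints lie in $\mathcal{N}_\ell\cup\{s_1,\dots,s_{i-1}\}\subseteq CH(\mathcal{N}_\ell)$ and $CH(\mathcal{N}_\ell)$ is convex. Hence $\mathcal{N}_\ell\subseteq\mathcal{N}_\ell\cup\mathcal{S}\subseteq CH(\mathcal{N}_\ell)$, so $CH(\mathcal{N}_\ell\cup\mathcal{S})=CH(\mathcal{N}_\ell)$, and the inequality ${\rm diam}(CH(\cdot))\le{\rm diam}(\cdot)$ recalled before Theorem~\ref{characterization}, applied to the network $\mathcal{N}_\ell\cup\mathcal{S}$, gives ${\rm diam}(CH(\mathcal{N}_\ell))\le{\rm diam}(\mathcal{N}_\ell\cup\mathcal{S})$. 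So it remains to build one shortcut set $\mathcal{S}$ with ${\rm diam}(\mathcal{N}_\ell\cup\mathcal{S})<{\rm diam}(CH(\mathcal{N}_\ell))+\varepsilon$; by the hypothesis ${\rm diam}(CH(\mathcal{N}_\ell))+\varepsilon<{\rm diam}(\mathcal{N}_\ell)$ such an $\mathcal{S}$ is automatically a shortcut set, so nothing more need be checked about the definition.

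For the upper bound I would fix a small parameter $\delta>0$, to be calibrated against $\varepsilon$ only at the very end, and overlay on the plane a square grid of side $\delta$ in general position with respect to $\mathcal{N}$. (Note that the hypothesis forces $CH(\mathcal{N}_\ell)$ to be two-dimensional: a degenerate hull would make $\mathcal{N}_\ell$ a segment and the two diameters equal.) Let $P$ be the union of the grid vertices lying in $CH(\mathcal{N}_\ell)$ and a finite set of points on the boundary $\partial CH(\mathcal{N}_\ell)$ placed at consecutive arc-distance at most $\delta$. I would then take $\mathcal{S}$ to consist of: (a) the edges of $CH(\mathcal{N}_\ell)$, each joining two hull vertices of $\mathcal{N}$ and hence points of $\mathcal{N}_\ell$; (b) every maximal piece of a grid line contained in $CH(\mathcal{N}_\ell)$ --- in general position each such piece has both endpoints on $\partial CH(\mathcal{N}_\ell)$, and once (a) and (b) are in place every grid vertex interior to $CH(\mathcal{N}_\ell)$ has been created as the crossing of its horizontal and vertical pieces; (c) all segments joining two points of $P$; and (d) one connector segment from each point of a $\delta$-dense sample of $\mathcal{N}_\ell$ to a point of $P$ at distance $O(\delta)$. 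Inserting these families in the order (a), (b), (c), (d) one verifies at once that every inserted segment has both endpoints on $\mathcal{N}_\ell$ or on previously inserted segments, so $\mathcal{S}$ is a legal (finite) shortcut set in the sense of Problem~\ref{ourproblem}.

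Writing $\mathcal{N}'=\mathcal{N}_\ell\cup\mathcal{S}$, the estimate rests on two observations. First, for $p,p'\in P$ the segment $\overline{pp'}$ is in $\mathcal{S}$, so $d_{\mathcal{N}'}(p,p')=d_e(p,p')\le{\rm diam}(CH(\mathcal{N}_\ell))$ because $P\subseteq CH(\mathcal{N}_\ell)$. Second, every point $x\in\mathcal{N}'$ satisfies $d_{\mathcal{N}'}(x,P)=O(\delta)$: if $x$ lies on a grid-line piece or on $\partial CH(\mathcal{N}_\ell)$ this is clear by walking along that line; if $x\in\mathcal{N}_\ell$ one walks at most $\delta$ along its edge to a sample point and then $O(\delta)$ along a connector; and if $x$ is interior to a segment $\overline{pp'}$ from (c), then $x$ lies in a single grid cell, and walking along $\overline{pp'}$ until it first meets the boundary of that cell costs at most $\sqrt2\,\delta$ and lands on a genuine vertex of $\mathcal{N}'$ --- the crossing of $\overline{pp'}$ with a grid-line piece or with $\partial CH(\mathcal{N}_\ell)$ --- from which a further $O(\delta)$ reaches $P$. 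Combining the two observations through the triangle inequality, for all $x,y\in\mathcal{N}'$ one gets $d_{\mathcal{N}'}(x,y)\le d_{\mathcal{N}'}(x,P)+{\rm diam}(CH(\mathcal{N}_\ell))+d_{\mathcal{N}'}(P,y)\le{\rm diam}(CH(\mathcal{N}_\ell))+c\,\delta$ for an absolute constant $c$, and choosing $\delta<\varepsilon/c$ finishes the proof.

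The step I expect to be delicate is the second observation, and specifically its validity near $\partial CH(\mathcal{N}_\ell)$: there the grid is clipped, a cell may be a thin sliver bounded partly by $\partial CH(\mathcal{N}_\ell)$, and a maximal grid-line piece may be shorter than $\delta$. This is exactly why the construction must include the hull edges (family (a)), \emph{all} clipped grid-line pieces (part of (b)), and a $\delta$-dense sample of $\partial CH(\mathcal{N}_\ell)$ inside $P$: with these present, the ``first exit point'' used in the argument is always an honest vertex of $\mathcal{N}'$ lying within $O(\delta)$ of $P$, whatever the local geometry. The only other routine point to confirm is the admissibility of the insertion order, which holds because the hull edges and the grid-line pieces (whose endpoints are hull vertices or points of $\partial CH(\mathcal{N}_\ell)$) are inserted before any segment that needs a grid vertex as an endpoint.
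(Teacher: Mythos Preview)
Your argument is correct and takes a genuinely different route from the paper. The paper's proof is a pure compactness argument: it sets $M={\rm diam}(CH(\mathcal{N}_\ell))+\varepsilon/4$, covers the compact set $\{p\in\mathcal{N}_\ell:{\rm ecc}(p)\ge M\}$ by finitely many $\varepsilon/4$-balls centred at points $p_1,\dots,p_k\in\mathcal{N}_\ell$, likewise covers each set $\{q:d(p_i,q)\ge M\}$ by balls centred at $q^i_1,\dots,q^i_{r_i}$, and takes $\mathcal{S}$ to be the collection of straight segments $p_iq^i_j$ (all with endpoints on $\mathcal{N}_\ell$). Triangle-inequality bookkeeping then gives the $+\varepsilon$ bound. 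Your approach instead builds an explicit $\delta$-mesh inside $CH(\mathcal{N}_\ell)$ together with the complete graph on the mesh points $P$ and short connectors to $\mathcal{N}_\ell$, and routes any two points of the augmented network through $P$. The paper's construction is shorter and uses far fewer segments (though the number is nonconstructive), while yours is fully explicit and even yields a concrete bound on $|\mathcal{S}|$ in terms of $\varepsilon$ and the geometry of $CH(\mathcal{N}_\ell)$. A further advantage of your write-up is that you explicitly bound the eccentricity of interior points of the inserted segments (your Case for $x$ on a segment of family~(c)); the paper's proof passes over this point rather quickly, and your grid structure makes that verification transparent.
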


\begin{proof}
Since ${\rm diam}(CH(\mathcal{N}_{\ell}))<{\rm diam}(\mathcal{N}_{\ell})$, by Theorem \ref{characterization}, $\mathcal{N}_{\ell}$ admits a shortcut set. Further, every shortcut set ${\cal S}$ for $\mathcal{N}_{\ell}$ verifies that
$CH(\mathcal{N}_{\ell})=CH(\mathcal{N}_{\ell} \cup  {\cal S})$ and so $${\rm diam}(CH(\mathcal{N}_{\ell}))={\rm diam}(CH(\mathcal{N}_{\ell} \cup  {\cal S}))\leq {\rm diam}(\mathcal{N}_{\ell} \cup  {\cal S}).$$ We next construct a shortcut set ${\cal S}$ that, in addition, satisfies that ${\rm diam}(\mathcal{N}_{\ell} \cup  {\cal S})<{\rm diam}(CH(\mathcal{N}_{\ell}))+\varepsilon$ for given $\varepsilon>0$ such that ${\rm diam}(CH(\mathcal{N}_{\ell}))+\varepsilon<{\rm diam}(\mathcal{N}_{\ell})$.

Consider the set $\mathcal{M}=\{p\in \mathcal{N}_{\ell} \, | \, {\rm ecc}(p)\geq M\}$ where $M={\rm diam}(CH(\mathcal{N}_{\ell}))+ \frac{\varepsilon}{4}$. This set is non-empty; otherwise  ${\rm ecc}(p)< M$ for every point $p\in \mathcal{N}_{\ell}$ and so ${\rm diam}(\mathcal{N}_{\ell})<M$, which contradicts ${\rm diam}(\mathcal{N}_{\ell})>{\rm diam}(CH(\mathcal{N}_{\ell}))+\varepsilon$. Note that this hypothesis is motivated by the fact that  ${\cal S}$ must be a non-empty set.

The set $\mathcal{M}$ is compact in $\mathbb{R}^2$. Indeed, $\mathcal{N}_{\ell}$ is compact in $\mathbb{R}^2$ (since it is a bounded closed point set) and so it suffices to prove that $\mathcal{M}$ is compact in $\mathcal{N}_{\ell}$.
Let $p\in \mathcal{N}_{\ell}\setminus \mathcal{M}$. Then ${\rm ecc}(p)<M$ and moreover, there is a neighborhood of $p$, say $\mathcal{N}_p\subseteq \mathcal{N}_{\ell}$, such that
 ${\rm ecc}(q)<M$ for every $q\in \mathcal{N}_p$. This implies that $\mathcal{N}_{\ell}\setminus\mathcal{M}$ is an open set in $\mathcal{N}_{\ell}$. Hence, $\mathcal{M}$ is a bounded closed set in $\mathcal{N}_{\ell}$ and so a compact.

The collection of balls $B(p,\frac{\varepsilon}{4})$ for $p\in \mathcal{M}$ is a cover of $\mathcal{M}$. Then, there is a finite subcover of $\mathcal{M}$, i.e., there exist $p_1, \ldots, p_k\in \mathcal{M}$ such that  \begin{equation}\mathcal{M}\subseteq \bigcup_{i=1}^k  B(p_i,\frac{\varepsilon}{4}).\end{equation}\label{rec1}

For $1\leq i\leq k$, let $\mathcal{M}_i=\{q\in \mathcal{N}_{\ell} \, | \, d(p_i,q)\geq M\}$. This set is non-empty because  ${\rm ecc}(p_i)\geq M$. Further it is bounded and, reasoning as above, one can check that it is a closed set in $\mathcal{N}_{\ell}$, and so a compact in $\mathbb{R}^2$. Since the collection of balls $B(q,\frac{\varepsilon}{4})$ for $q\in \mathcal{M}_i$ is a cover of $\mathcal{M}_i$ then there
 exist $q_1^i, \ldots, q_{r_i}^i\in \mathcal{M}_i$ such that
\begin{equation}\mathcal{M}_i\subseteq \bigcup_{t=1}^{r_i} B(q_t^i,\frac{\varepsilon}{4}).\end{equation}\label{rec2}

Let ${\cal S}$ be the set of segments with endpoints $p_i$ and $q_j^i$ for $1\leq i\leq k$ and $1\leq j\leq r_i$.
We now prove that $d(p,q)<{\rm diam}(CH(\mathcal{N}_{\ell}))+\varepsilon$ for every $p,q\in \mathcal{N}_{\ell}\cup {\cal S}$.

Clearly, if either $p$ or $q$ do not belong to $\mathcal{M}$ then $$d(p,q)<M={\rm diam}(CH(\mathcal{N}_{\ell}))+ \frac{\varepsilon}{4}<{\rm diam}(CH(\mathcal{N}_{\ell}))+\varepsilon. $$
Thus, we can assume that $p,q\in \mathcal{M}$. By (1), there exist $p_i,p_j\in \mathcal{M}$ such that $d(p,p_i)< \frac{\varepsilon}{4}$ and $d(q,p_j)<\frac{\varepsilon}{4}$.
If $p_j\notin \mathcal{M}_i$ then $d(p_i,p_j)<M$ and so $$d(p,q)\leq d(p,p_i)+d(p_i,p_j)+d(p_j,q)<{\rm diam}(CH(\mathcal{N}_{\ell}))+ \frac{3\varepsilon}{4}<{\rm diam}(CH(\mathcal{N}_{\ell}))+\varepsilon.$$ Otherwise, by (2), there exists  $q_h^i\in\mathcal{M}_i$  such that $d(p_j,q_h^i)<\frac{\varepsilon}{4}$. Further, there is a segment in ${\cal S}$ with endpoints $p_i, q_h^i$ which implies that $$d(p_i,q_h^i)\leq {\rm diam}(CH(\mathcal{N}_{\ell}))<M.$$ Therefore
 $d(p_i,p_j)\leq d(p_i, q_h^i)+d(q_h^i,p_j)<M+\frac{\varepsilon}{4}$, and we obtain
 $$d(p,q)\leq d(p,p_i)+d(p_i,p_j)+d(p_j,q)<M+ \frac{3\varepsilon}{4}={\rm diam}(CH(\mathcal{N}_{\ell}))+\varepsilon.$$
 Thus, we can conclude that $ {\rm diam}(\mathcal{N}_{\ell} \cup  {\cal S})<{\rm diam}(CH(\mathcal{N}_{\ell}))+\varepsilon$.
\end{proof}

\section{Computing shortcuts}\label{sec:scn1}

This section is devoted to prove that one can always determine in polynomial time whether  $\mathcal{N}_{\ell}$ admits a shortcut, and in that case, to compute it. For clarity, we split this result into two: Proposition \ref{theorem:algorithm_shortcut-simple} considers the case of simple shortcuts and Theorem \ref{theorem:algorithm_shortcut} states the result for all shortcuts. First, we prove that ${\rm diam}(\mathcal{N}_{\ell})$ can be computed in polynomial time (Lemma~\ref{lemma:algorithm-diameter}).
It is important to note that the computation of ${\rm diam}(\mathcal{N}_{\ell})$ is done in \cite{CG82} for general Euclidean networks but we include a version of that result (restricted to our plane networks) because its proof plays a fundamental role in the proofs of Proposition \ref{theorem:algorithm_shortcut-simple} and Theorem~\ref{theorem:algorithm_shortcut}. The following technical lemma will be useful in the proof of Lemma~\ref{lemma:algorithm-diameter} and also in
 the remainder of the paper.

\begin{lemma} \label{lemma:2shortest} Let $\mathcal{N}$ be a plane Euclidean network whose locus $\mathcal{N}_{\ell}$ has diametral points $p,q$ placed on two different non-pendant edges $uv$ and $u'v'$ of $\mathcal{N}$. Then, there exist two different shortest $p$-$q$ paths, say $P_1$ and $P_2$, such that either $u,u'\in P_1$ and $v,v'\in P_2$, or $u,v'\in P_1$ and $v,u'\in P_2$ (see Figure~\ref{fig:L1}).
\end{lemma}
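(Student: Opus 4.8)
The plan is to argue by a perturbation/continuity argument on the location of the diametral points $p$ and $q$ along their respective edges, exploiting the fact that these points realize a \emph{maximum} of the distance function. First I would fix shortest $p$-$q$ paths and record through which endpoints of $uv$ and $u'v'$ they pass: any shortest $p$-$q$ path enters the edge $uv$ through $u$ or through $v$ (it cannot use the interior of $uv$ as an intermediate edge, and it cannot enter from both sides since that would create a shorter route by cutting through $p$), and similarly for $u'v'$. This gives each shortest path a \emph{type} in $\{(u,u'),(u,v'),(v,u'),(v,v')\}$, according to the pair of endpoints it uses. The claim to prove is then that the set of types realized by shortest $p$-$q$ paths cannot lie entirely inside a single ``side'', i.e. cannot be contained in $\{(u,u'),(u,v')\}$ (all using $u$), nor in $\{(v,u'),(v,v')\}$, nor in $\{(u,u'),(v,u')\}$, nor in $\{(u,v'),(v,v')\}$; once no such containment holds, a short combinatorial check shows two of the four types must appear that are ``opposite'' in the required sense, yielding the two paths $P_1,P_2$ in the statement.

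The heart of the argument is the exclusion of, say, the case where every shortest $p$-$q$ path uses $u$ (type in $\{(u,u'),(u,v')\}$). In that situation I would perturb $p$ slightly along $uv$ \emph{towards $u$}: moving $p$ by a small amount $\eta$ towards $u$ decreases the length $d(p,u_1)$ of the first fragment of every shortest path (since they all route through $u$), hence decreases $d(p,q)$ by exactly $\eta$. But one must also check that no \emph{other} $p'$-$q$ path, which was previously not shortest, becomes shortest and longer than the new minimum --- this is handled by compactness/finiteness: there are finitely many combinatorial path-types in the network, the length of each varies continuously (indeed piecewise-linearly) in the position of $p$, and all non-shortest types have length strictly exceeding $d(p,q)$ at the base point, so for $\eta$ small enough they remain longer than $d(p,q)-\eta$. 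Hence after the perturbation $d(p',q) < d(p,q) = {\rm diam}(\mathcal{N}_{\ell})$ for $q$ held fixed — wait, that only says $p'$ has smaller distance to $q$, not smaller eccentricity. To actually contradict diametrality I instead argue: $d(p,q)={\rm diam}(\mathcal{N}_\ell)$ and $p$ maximizes $d(\cdot,q)$ over $\mathcal{N}_\ell$; if all shortest paths use $u$ then $d(\cdot,q)$ is strictly increasing as we approach $p$ from the $v$-side of $uv$, so $p$ cannot be a local max of $d(\cdot,q)$ unless $p$ coincides with an endpoint or the edge is degenerate — and if $p$ is an interior point this already contradicts maximality; if $p=v$ one redoes the symmetric argument on the other incident edges, or notes $uv$ is then effectively a path-prefix and the diametral point would lie on a different edge, contrary to hypothesis. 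The symmetric exclusions (all paths use $v$; all paths use $u'$; all paths use $v'$) follow by perturbing $q$ analogously.

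The main obstacle I anticipate is making the ``all shortest paths through $u$'' step fully rigorous when $p$ is an interior point of $uv$ but the local behavior of $d(\cdot,q)$ is subtle because the set of shortest-path types can change exactly at $p$: one side of $p$ may be governed by type $(u,\cdot)$ and the other by type $(v,\cdot)$, so $d(\cdot,q)$ has a local maximum at $p$ precisely when \emph{both} a $u$-type and a $v$-type shortest path exist at $p$. This is in fact the crux: a point $p$ interior to $uv$ can be diametral only if there is both a shortest $p$-$q$ path through $u$ \emph{and} one through $v$ (otherwise $d(\cdot,q)$ is strictly monotone near $p$ along $uv$ and $p$ is not a local max). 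Symmetrically, $q$ interior to $u'v'$ forces a shortest path through $u'$ and one through $v'$. Combining: we have a shortest path through $u$, one through $v$, one through $u'$, one through $v'$. A small case analysis on how these four combine into actual $p$-$q$ paths (using that concatenations of consistent half-paths are again shortest paths, by the usual ``subpath of shortest is shortest'' argument) produces either the pair $\{$(path through $u$ and $u'$), (path through $v$ and $v'$)$\}$ or the pair $\{$(path through $u$ and $v'$), (path through $v$ and $u'$)$\}$, which is exactly the dichotomy in the statement; and these two paths are genuinely different since they use disjoint endpoint-pairs on $uv$. I would close by remarking that the non-pendancy of $uv$ and $u'v'$ is what guarantees both endpoints $u,v$ (resp. $u',v'$) are usable as entry points — for a pendant edge one endpoint is a leaf and only the other can serve — so the hypothesis is used exactly here.
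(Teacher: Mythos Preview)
Your approach is essentially the same as the paper's: both argue by perturbing a diametral point along its edge and using maximality of $d(\cdot,q)$ to force shortest $p$-$q$ paths through \emph{both} endpoints $u$ and $v$ (and symmetrically through $u'$ and $v'$), then select the desired pair $P_1,P_2$ from the resulting four paths by a short case check. The paper's presentation is simply more direct --- it takes $p'\in pv$ close to $p$ and observes that if $pv\cup P_v$ were not a shortest $p$-$q$ path then $d(p',q)>d(p,q)={\rm diam}(\mathcal{N}_\ell)$, an immediate contradiction --- whereas you first perturb toward $u$ (which gives no contradiction), catch yourself, and then arrive at the same monotonicity/local-maximum argument.
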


\begin{proof}
Given shortest paths $P_u$ and $P_v$ connecting, respectively, $u,q$ and $v,q$,  the paths $pu\cup P_u$ and $pv\cup P_v$ are necessarily shortest $p$-$q$ paths when $p$ and $q$ are diametral points. Otherwise, say that $pv\cup P_v$ is not a shortest path connecting $p$ and $q$, then there would be a point $p'\in pv$ placed sufficiently close to $p$ such that $p'u\cup P_u$ is a shortest $p'$-$q$ path. Hence $d(p',q)>d(p,q)={\rm diam}(\mathcal{N}_{\ell})$, a contradiction.

Analogously, we can construct the paths $qu'\cup P_{u'}$ and $qv'\cup P_{v'}$ which are shortest $p$-$q$ paths, and  $P_1$ and $P_2$ can
be easily chosen among those four paths.
\end{proof}

\begin{figure}[!hbt]
  \centering
  \includegraphics[width=6.5cm]{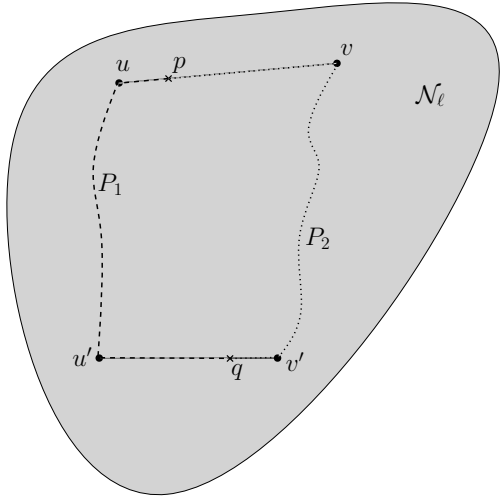}
  \caption{Paths $P_1$ and $P_2$ containing vertices $u,u'$ and $v,v'$, respectively.}
  \label{fig:L1}
\end{figure}

%As it was explained before, the computation of ${\rm diam}(\mathcal{N}_{\ell})$ is harder than the computation of  ${\rm diam}(\mathcal{N})$ since it involves distances between any points on edges of $G$ (unless $G$ is a tree).  However, we next show that ${\rm diam}(\mathcal{N}_{\ell})$ can be computed in polynomial time by only computing the distances between vertices of $G$.

\begin{lemma} \label{lemma:algorithm-diameter}
Given a plane Euclidean network $\mathcal{N}$  with $n$ vertices, the diameter of its locus $\mathcal{N}_{\ell}$ can be computed in polynomial time in $n$.
\end{lemma}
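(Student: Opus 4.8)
The plan is to reduce the computation of ${\rm diam}(\mathcal{N}_\ell)$ to a finite number of optimization problems, one for each ordered pair of edges of $\mathcal{N}$, by exploiting the structure of shortest $p$-$q$ paths given in Lemma~\ref{lemma:2shortest}. First I would observe that for a diametral pair $p,q$ we may assume $p$ lies on an edge $uv$ and $q$ on an edge $u'v'$ (if either point is a vertex the analysis degenerates to a subcase, and vertex-to-vertex distances are handled by a standard all-pairs shortest path computation on $\mathcal{N}$ in polynomial time). Pendant edges need slight separate care since Lemma~\ref{lemma:2shortest} assumes non-pendant edges, but a diametral point on a pendant edge must be the pendant vertex itself, again reducing to the vertex case. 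So fix an ordered pair of non-pendant edges $(uv, u'v')$ and search for the pair $(p,q)$ with $p\in uv$, $q\in u'v'$ maximizing $d(p,q)$ subject to this pair realizing the diameter.

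The key point is that, by Lemma~\ref{lemma:2shortest}, a diametral pair on edges $uv$ and $u'v'$ admits two shortest $p$-$q$ paths entering $uv$ from opposite ends $u$ and $v$ and entering $u'v'$ from (in one of the two matchings) opposite ends $u'$ and $v'$. Parametrize $p$ by its distance $x = d_e(u,p) \in [0, |uv|]$ along the edge and $q$ by $y = d_e(u',q)\in[0,|u'v'|]$. Then, writing $D_{ab}$ for the (precomputed, via all-pairs shortest paths in $\mathcal{N}$) distance between vertices $a$ and $b$, the length of the best path using the two specified edge-ends is the minimum of expressions such as $x + D_{uu'} + y$, $x + D_{uv'} + (|u'v'|-y)$, $(|uv|-x) + D_{vu'} + y$, $(|uv|-x)+D_{vv'}+(|u'v'|-y)$. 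Each is affine in $(x,y)$, so $d(p,q)$ as a function on the rectangle $[0,|uv|]\times[0,|u'v'|]$ is a minimum of affinely-linear functions, hence concave and piecewise linear; its maximum over the rectangle is attained either at a vertex of the rectangle, on an edge, or at a point where two (or more) of the affine pieces coincide — in all cases at the intersection of a constant number of lines, so there are only $O(1)$ candidate points per pair of edges. I would enumerate these candidates, and for each candidate $(p,q)$ verify by a single-source shortest-path computation from $p$ (respectively from $q$) in the subdivided network $\mathcal{N}_\ell$ with $p,q$ added as temporary vertices that the value computed really is $d(p,q)$ and that it is achieved as an eccentricity — i.e. that no other point is farther.

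Putting it together: there are $O(|E(\mathcal{N})|^2) = O(n^2)$ ordered pairs of edges, each contributing $O(1)$ candidate points, and for each candidate a polynomial-time shortest-path verification; the overall maximum over all candidates (together with the vertex-to-vertex and vertex-to-edge subcases) is ${\rm diam}(\mathcal{N}_\ell)$. The main obstacle I expect is not the enumeration but the careful case analysis justifying that the maximum of the concave piecewise-linear function $d(\cdot,\cdot)$ on the rectangle falls among $O(1)$ explicitly describable points — one must argue that even though globally $d(p,q)$ is built from a potentially large number of shortest-path ``modes'', only the few modes using the edge-endpoints guaranteed by Lemma~\ref{lemma:2shortest} are relevant at a diametral pair, so the local structure near the optimum is governed by finitely many affine functions. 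Handling the boundary of the rectangle (where $p$ or $q$ slides to a vertex) and the pendant-edge degeneracies uniformly is the fiddly part, but each is routine once the generic case is in place.
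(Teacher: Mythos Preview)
Your approach is correct and follows the same high-level strategy as the paper: reduce to ordered pairs of edges, invoke Lemma~\ref{lemma:2shortest} to control the structure of shortest paths at a diametral pair, and handle the vertex and pendant cases separately. The paper, however, short-circuits your optimization step entirely. Observe that if $P_1$ goes through $u,u'$ and $P_2$ through $v,v'$, then $|P_1|+|P_2|=d(u,v)+d(v,v')+d(v',u')+d(u',u)$ is a constant independent of the positions of $p$ and $q$ on their edges; since $|P_1|=|P_2|=d(p,q)$ at a diametral pair, one gets $d(p,q)$ as half this ``cycle length''. The paper therefore simply computes, for each pair of non-pendant edges, the minimum of the two half-perimeters (one per matching of endpoints) and takes the global maximum. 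No candidate-point enumeration and no verification pass are needed: every value produced is an upper bound on $\max_{p\in uv,\,q\in u'v'} d(p,q)$ (any such $p,q$ lie on the closed walk of length $2C$, hence $d(p,q)\le C$), and for the edge pair carrying the true diametral pair the bound is attained.

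Two smaller remarks. First, your concern that $d(p,q)$ might involve ``a potentially large number of shortest-path modes'' is unfounded here: any $p$--$q$ path must exit $uv$ through $u$ or $v$ and enter $u'v'$ through $u'$ or $v'$, so $d(p,q)$ is \emph{exactly} the minimum of the four affine expressions you wrote, globally on the rectangle --- not just locally near the optimum. Second, the verification by single-source shortest paths is redundant: each candidate value is already a genuine distance in $\mathcal{N}_\ell$ (hence $\le {\rm diam}(\mathcal{N}_\ell)$), and the true diametral pair is among the candidates, so the maximum is automatically correct. Your route would work, but the paper's closed-form formula is what gets reused verbatim in the later proofs of Proposition~\ref{theorem:algorithm_shortcut-simple} and Theorem~\ref{theorem:algorithm_shortcut}.
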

\begin{proof}
We proceed as follows to find two diametral points on $\mathcal{N}_{\ell}$.

\begin{enumerate}

\item Compute the distances between any pair of vertices of $\mathcal{N}$.

\item For every pair of non-pendant edges $uv, u'v'\in E(\mathcal{N})$ compute: $$ \min  \left\lbrace \frac{d(u,v)+d(v,v')+d(v',u')+d(u',u)}{2}, \frac{d(u,v)+d(v,u')+d(u',v')+d(v',u)}{2} \right\rbrace$$ Here, we consider the case in which the diametral points are located on two non-pendant edges, and use the paths $P_1$ and $P_2$ of Lemma \ref{lemma:2shortest}. Thus, the preceding value is simply $(|P_1|+|P_2|)/2$.

\item For every pendant edge $uv\in E(\mathcal{N})$ with $\delta(u)=1$, and every non-pendant edge $u'v'\in E(\mathcal{N})$ compute: $$d(u,v)+\frac{d(v,u')+d(u',v')+d(v',v)}{2}$$

Note that if there is a diametral pair in which one of the points lies on a pendant edge, then it is the pendant vertex. Further, we can assume that the other point is located on a non-pendant edge; otherwise that pair is obtained in step (1).

\item Compute the maximum value among those obtained in the previous steps; this maximum is ${\rm diam}(\mathcal{N}_{\ell})$.

\end{enumerate}

 Since the values of steps (2) and (3) only depend on those of step (1), and they can be computed in polynomial time (see for instance \cite{CG82}), then we can also compute ${\rm diam}(\mathcal{N}_{\ell})$ in polynomial time.
\end{proof}

We are now ready to prove our main result in this section. As it was said before,  we split it into two results: Proposition  \ref{theorem:algorithm_shortcut-simple} below considers the case of simple shortcuts and its proof contains the main ideas of the proof of Theorem \ref{theorem:algorithm_shortcut} which is the general result for all shortcuts. In the proof of this general theorem, we shall mainly focus on the differences with the case of simple shortcuts.

\begin{proposition} \label{theorem:algorithm_shortcut-simple}
For every plane Euclidean network $\mathcal{N}$, it is always possible to determine in polynomial time whether there exists a simple shortcut for $\mathcal{N}_{\ell}$ and, in that case, to compute it.
\end{proposition}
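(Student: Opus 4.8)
The plan is to reduce the existence question to a finite family of decidable subproblems, one for each candidate pair of edges (or vertex/edge pair) hosting a diametral pair after insertion, mirroring the case analysis in the proof of Lemma~\ref{lemma:algorithm-diameter}. Since a simple shortcut $s$ meets $\mathcal{N}_\ell$ only at its two endpoints $x,y$, the locus $\mathcal{N}_\ell\cup s$ has $V(\mathcal{N})\cup\{x,y\}$ as vertices and the new diameter is governed, via Lemma~\ref{lemma:2shortest}, by shortest paths whose terminal fragments lie on non-pendant edges of $\mathcal{N}\cup s$ (or on pendant edges, in which case a pendant vertex is one of the diametral points). So first I would fix a target pair of ``host'' edges $e,e'$ of $\mathcal{N}$ (the two edges on which a would-be diametral pair $p\in e$, $q\in e'$ sit after adding $s$), together with a guess of which of the two combinatorial path-types of Lemma~\ref{lemma:2shortest} is realized; there are $O(|E(\mathcal{N})|^2)$ such choices.

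Next, for a fixed host pair, I would parametrize the shortcut by its two endpoints. Each endpoint $x$ (resp.\ $y$) ranges over $\mathcal{N}_\ell$, hence over a union of $|E(\mathcal{N})|$ segments, and within one segment it is described by a single real parameter; so $s$ is described by a constant number of real parameters confined to a constant-complexity domain. For such an $s$, the distance $d_{\mathcal{N}_\ell\cup s}(p,q)$ between two points on the host edges is the minimum of finitely many expressions, each being an affine (or, through the inserted segment, a sum involving one Euclidean-length term $\|x-y\|$) function of the parameters of $p,q,x,y$, built from the precomputed vertex-to-vertex distances of $\mathcal{N}$ as in Lemma~\ref{lemma:algorithm-diameter} together with terms routing through $x$ and through $y$. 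The requirement that $\mathrm{diam}(\mathcal{N}_\ell\cup s)<\mathrm{diam}(\mathcal{N}_\ell)$ then becomes: for every host pair and every point $p,q$ on the corresponding edges, the corresponding distance is strictly below $D:=\mathrm{diam}(\mathcal{N}_\ell)$ (computed by Lemma~\ref{lemma:algorithm-diameter}). Quantifying over $p,q$ by a $\min/\max$, and over the polynomially many host pairs by a further $\min$, this is a first-order sentence over the reals with a constant number of variables and polynomially many polynomial (in fact, piecewise-algebraic, because of the $\|x-y\|$ and the $\min$'s) constraints; decidability and polynomial-time solvability for fixed variable count follow from standard real-algebraic tools (e.g.\ constructing the arrangement of the $O(\mathrm{poly})$ algebraic curves/surfaces defined by equating pairs of these expressions and by the threshold $D$, then testing each cell), and from a cell where the sentence holds one reads off a concrete $s$. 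One must also add the \emph{simplicity} constraint, i.e.\ that the open segment $xy$ is disjoint from $\mathcal{N}_\ell$; this too is a semialgebraic condition (the segment misses each edge of $\mathcal{N}$ and each vertex), so it just enlarges the arrangement.

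The main obstacle is organizing the distance function $d_{\mathcal{N}_\ell\cup s}(p,q)$ cleanly: after adding $s$, a shortest $p$--$q$ path may use $s$ once, may enter and leave $s$ at its endpoints in either order, and may also route through the \emph{endpoints} of $s$ without traversing $s$ (since $x,y$ are new vertices sitting in the interior of old edges, they split those edges), so the bookkeeping of ``which fragments on which edges, in which order'' is where the real work lies; once one fixes, for each relevant pair of points, the combinatorial type of the shortest path (finitely many types per host pair, as in Lemma~\ref{lemma:2shortest}), each distance candidate becomes an explicit closed-form expression and the rest is the routine real-algebraic machinery. I would also note the subtle point, already visible in Lemma~\ref{lemma:2shortest}, that at a diametral pair \emph{two} shortest paths coexist, so the ``worst'' $p,q$ for a given host pair can be pinned down by a balancing condition; exploiting this lets one replace the inner $\max_{p,q}$ by evaluation at finitely many critical configurations, keeping the arrangement polynomial-size. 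Finally, Theorem~\ref{theorem:algorithm_shortcut} will follow by the same scheme after allowing $s$ to cross edges of $\mathcal{N}$ (dropping the simplicity constraint and enlarging the set of path-types accordingly), which is why the proof is presented first in the simple case.
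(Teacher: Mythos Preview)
Your high-level strategy is sound and would yield a polynomial-time decision procedure, but it follows a different route from the paper's proof. The paper does \emph{not} quantify over all point-pairs $(p,q)$ in the augmented locus. Instead it splits $\mathrm{diam}(\mathcal{N}_\ell\cup s)<D$ into two finite checks: (i) every diametral pair of the \emph{original} locus must have its distance decreased (there are only polynomially many such pairs, by Lemma~\ref{lemma:2shortest} and the cited Lemma~9 of~\cite{BDCDGMSS13}); and (ii) every point on $s$ must have eccentricity below $D$, which is again controlled edge-by-edge via Lemma~\ref{lemma:2shortest}. Both (i) and (ii) become explicit \emph{conic} inequalities in the two real parameters $(t,t')$ that locate the shortcut endpoints on a fixed pair of edges, so the test reduces to an arrangement of conics in $\mathbb{R}^2$. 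Simplicity is also handled structurally rather than as an added semialgebraic constraint: the paper first partitions the space of candidate segments into $O(n^4)$ ``hourglass'' regions $\mathcal{P}_{e,e'}(m)$ (segments joining $e$ to $e'$ with a fixed combinatorial type of line, hence meeting nothing in between), inside each of which every segment is automatically simple and the feasible $(t,t')$-region is bounded by two monotone polygonal chains. Your approach leans on black-box real quantifier elimination with a fixed number of variables, which is conceptually quick but hides the structure and carries large implicit exponents; the paper's decomposition is more work to set up but gives an explicit two-dimensional arrangement with a transparent complexity count, and it also sets up the extension to non-simple shortcuts in Theorem~\ref{theorem:algorithm_shortcut}, where the key additional observation is that the constraint regions in $(t,t')$, while no longer conic, remain \emph{convex}.
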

\begin{proof}

There are two main steps in proving the result. First, we split the searching space into a polynomial number of ``equivalent'' regions (step (1)). Then, we design a method for seeking for a shortcut in each of those regions in polynomial time (step (2)).

\,

\noindent \emph{Step (1). Construction of a polynomial number of ``equivalent'' regions.}

\,

Consider two arbitrary vertical lines defining a strip enclosing $\mathcal{N}_{\ell}$. For each vertex $u \in V(\mathcal{N})$, take two horizontal segments defined by $u$ as one endpoint and the other in one of the vertical lines. Let ${\cal H}$ be the set of those $2n$ segments (formally, the direction of the segments of ${\cal H}$ must be different than those of the edges of $\mathcal{N}$ but, for simplicity, we can assume them to be horizontal).  We say that two lines are equivalent if they intersect the same segments of ${\cal H}$. Observe that there are $O(n^2)$ classes of equivalent lines. Indeed, for every pair of vertices $u,v\in V(\mathcal{N})$ consider the lines $m_{u^+v^+}$ and $m_{u^-v^-}$ parallel to segment $uv$ and leaving $u$ and $v$ in the same halfplane, and lines $m_{u^+v^-}$, $m_{u^-v^+}$ leaving $u,v$ in different halfplanes. These four lines must be placed sufficiently closed to $u$ and $v$; see Figure \ref{fig:th}(a). It is easy to check that every class of equivalent lines has a representative in the set $\{m_{u^+v^+}, m_{u^-v^-}, m_{u^+v^-}$, $m_{u^-v^+} \, | \, u,v\in V(\mathcal{N})\}$, whose cardinality is $O(n^2)$.

Given a line $ m$ that crosses two edges $e,e' \in E(\mathcal{N})$ and intersects no other edges in between them, let ${\cal P}_{e,e'}(m)$ be
the set of equivalent lines to $m$ that intersect edges $e$ and $e'$ (clearly, none of these lines intersect any edge in between $e$ and $e'$).
Observe that the intersection of a line of ${\cal P}_{e,e'}(m)$ with both edges determines a segment. Thus, $\mathcal P_{e,e'}(m)$ can also be viewed as the set of segments given by the corresponding intersections. It is well-known that the region of the plane defined by ${\cal P}_{e,e'}(m)$ has the shape of an hourglass, as illustrated in Figure~\ref{fig:th}(b); see for instance Section 3.1 of \cite{CG89}. Further, by the argument above, there are obviously $O(n^2)$ regions ${\cal P}_{e,e'}(m)$ for each two edges $e,e'\in E(\mathcal{N})$.

\begin{figure}[ht]
\begin{center}
\begin{tabular}{cccc}
\includegraphics[width=0.35\textwidth]{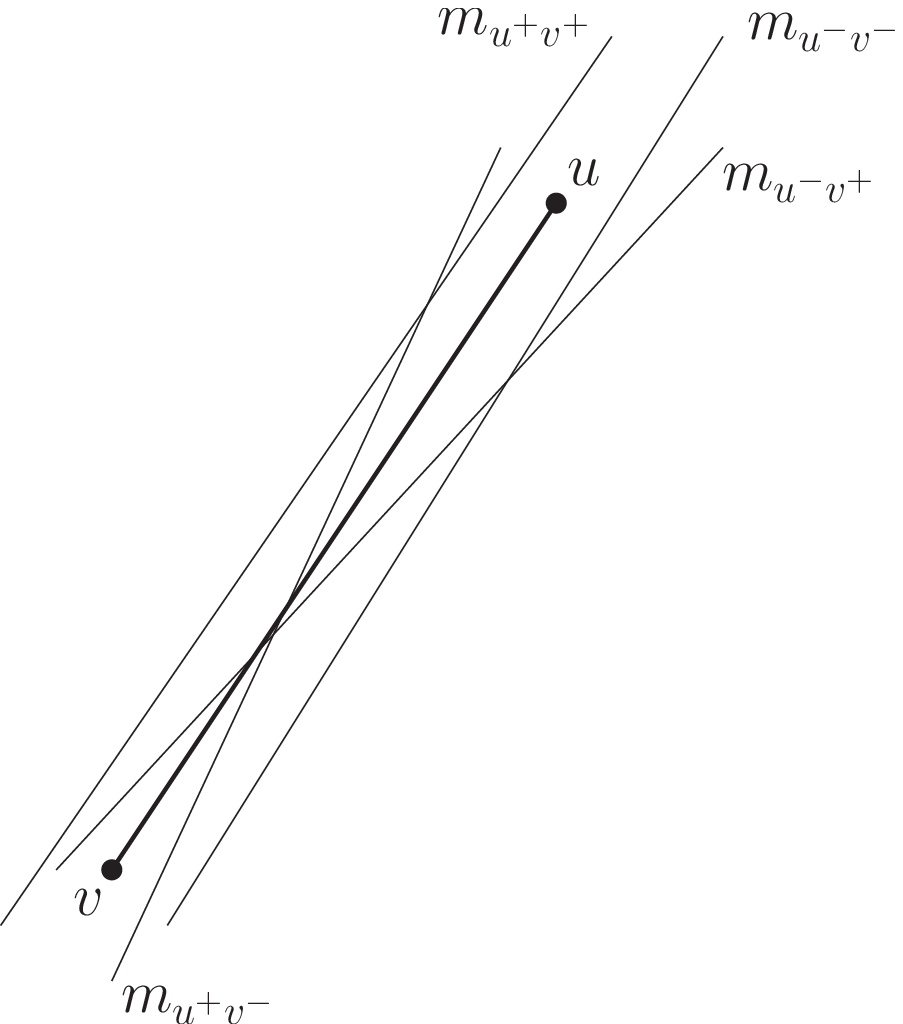}
&&&
\includegraphics[width=0.30\textwidth]{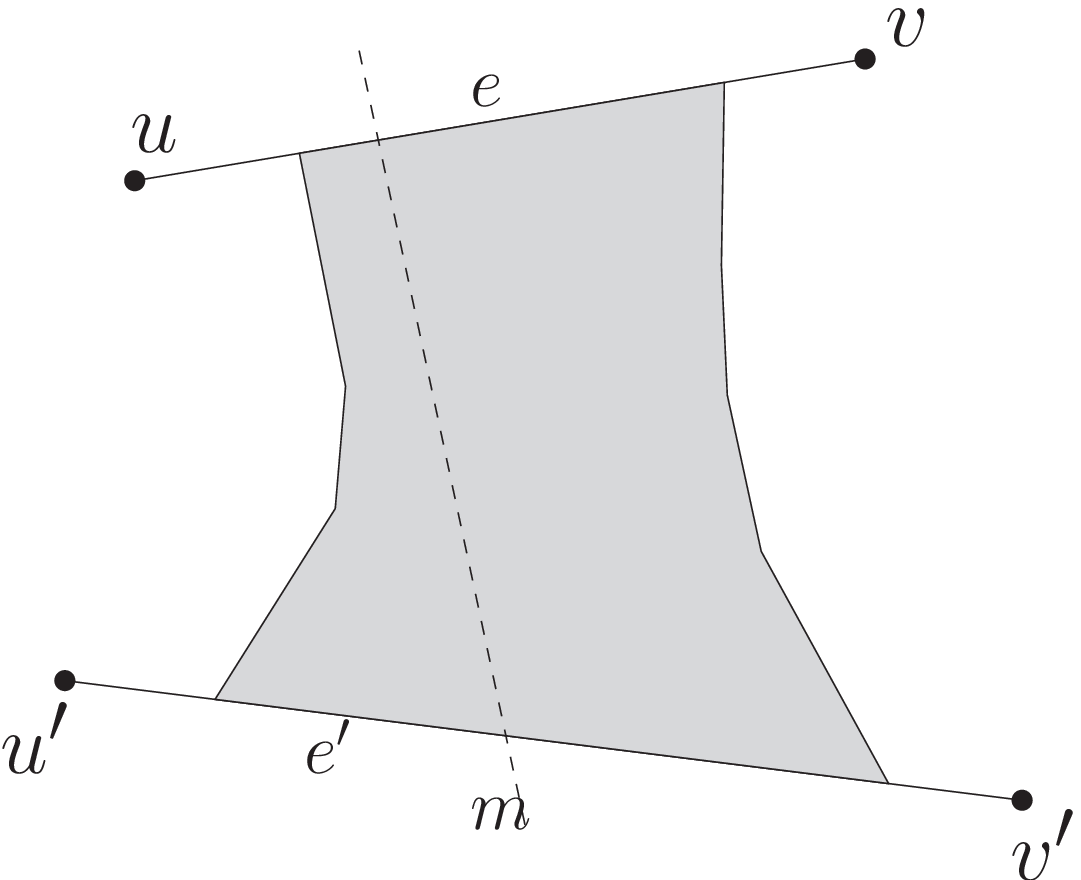}\\
(a)&&&(b)
\end{tabular}
\end{center}
\caption{(a) Lines associated to vertices $u$ and $v$, (b) region ${\cal P}_{e,e'}(m)$.}\label{fig:th}
\end{figure}

\,

\noindent \emph{Step (2).  Seeking for a simple shortcut in each region ${\cal P}_{e,e'}(m)$ in polynomial time.}

\,

Consider now a region ${\cal P}_{e,e'}(m)$. For simplicity, we shall say ${\cal P}(m)$ but one must recall that edges $e$ and $e'$ are associated to the region. We next show how to decide in polynomial time whether there is a segment $pp'\in {\cal P}(m)$ (endpoint $p$ on $e$ and $p'$ on $e'$) that is a shortcut for $\mathcal{N}_{\ell}$ (which would be a simple shortcut since lines of ${\cal P}(m)$ intersect no other edges in between $e$ and $e'$). This is: (i) $pp'$ must decrease the distance of all the diametral pairs of points on ${\cal N}_{\ell}$, and (ii)  ${\rm ecc}(q)<{\rm diam}(\mathcal{N}_{\ell})$ for every $q\in pp'$.

 Each segment of ${\cal P}(m)$ can be codified by its endpoints on $e$ and $e'$ as follows. If $e=uv$ and $e'=u'v'$ then a point $p$ on $e$ can be expressed as $p=ut+(1-t)v$, and a point on $e'$ is $p'=u't'+(1-t')v'$. Thus, a segment $pp'\in {\cal P}(m)$ is represented by a pair $(t,t')$.

The pairs $(t,t')$ must lie inside a region ${\cal R}(m)\subseteq \mathbb{R}^2$ whose boundary is given by the coordinates of the segments bounding ${\cal P}(m)$. Since region ${\cal P}(m)$ has the shape of an hourglass, the boundary of  ${\cal R}(m)$ is determined by two polygonal chains that are monotone with respect to the two axes,  one being increasing and the other decreasing.
Figure~\ref{fig:regionR}(b) shows an example in which points $(t_i,t'_i)$ and  $(s_i, s'_i)$ correspond, respectively, to the coordinates of the segments that form the left and right boundaries of the region ${\cal P}(m)$ in Figure~\ref{fig:regionR}(a). The polygonal chain given by points $(s_i, s'_i)$ is increasing ($s_i>s_j$ and $s'_i<s'_j$ for $i<j$)  and that given by points $(t_i,t'_i)$ is decreasing
($t_i<t_j$ and $t'_i>t'_j$ for $i<j$). Each point in ${\cal R}(m)$ represents a segment of ${\cal P}(m)$.

\begin{figure}[ht]
\begin{center}
\begin{tabular}{ccc}
\includegraphics[width=0.35\textwidth]{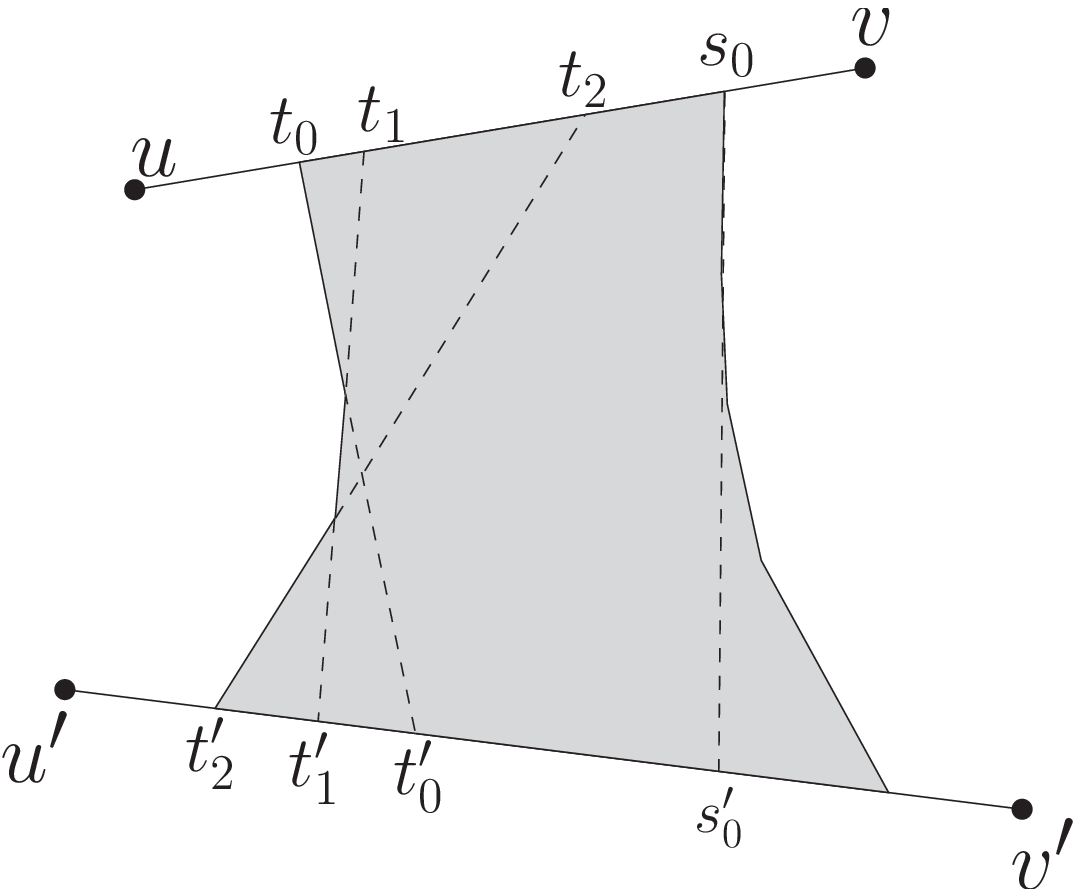}
&&
\includegraphics[width=0.50\textwidth]{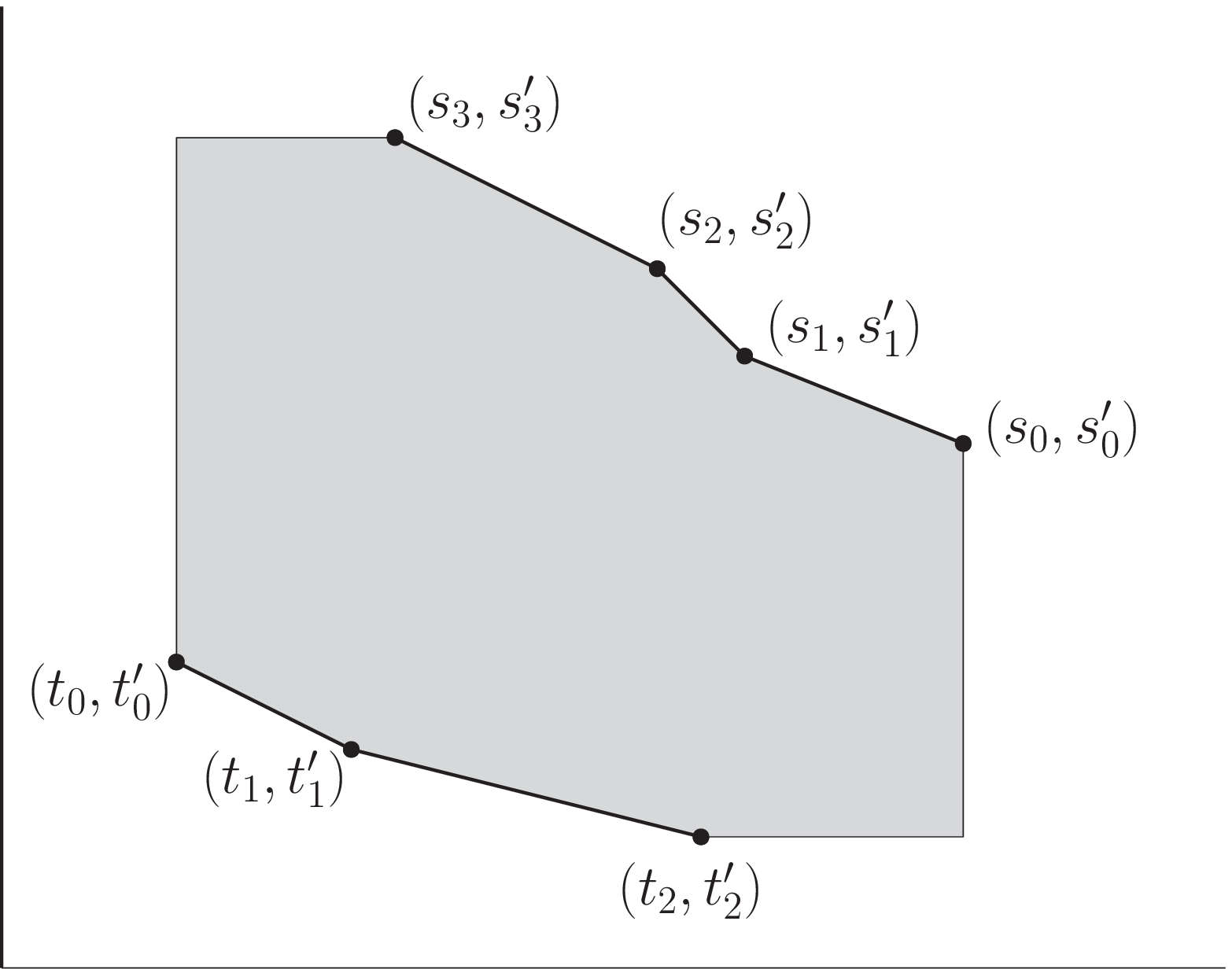}\\
(a)&&(b)
\end{tabular}
\end{center}
\caption{Segments in the boundary of region ${\cal P}(m)$ in (a) determine the boundary of region ${\cal R}(m)$ in (b).}\label{fig:regionR}
\end{figure}

By  Lemma~\ref{lemma:algorithm-diameter},  $d={\rm diam}(\mathcal{N}_{\ell})$ can be computed in polynomial time. Moreover, regarding condition (i) above, we know that the value $d$ is obtained by checking whether there are diametral points on $\mathcal{N}_{\ell}$ that may be: (a) two vertices,  (b) two points on different non-pendant edges, and (c) a pendant vertex and a point on a non-pendant edge. Thus, from Lemma 9 of \cite{BDCDGMSS13}, one can deduce that there is at most a quadratic set of diametral points.

Consider now the endpoints $u$ and $ u'$ of edges $e$ and $e'$, respectively. We distinguish three cases depending on the type of diametral points.

\,

\noindent \emph{Case (a).} Let $w,z\in V(\mathcal{N})$ be diametral vertices of $\mathcal{N}_{\ell}$. Suppose that $d_1=d(w,u)+d(z,u') \leq d(w,u')+d(z,u)$; otherwise we would follow the same argument by taking $d_1=d(w,u')+d(z,u)$. If there is a segment $pp'\in {\cal P}(m)$  in a path passing through $u$ and $u'$ that decreases $d(w,z)=d$, then $d_1+d(u,p)+d(p,p')+d(p',u') < d$ which is
\begin{equation}\label{parabola1}
 d(u,p)+d(p,p')+d(p',u')<d-d_1
\end{equation}

\noindent \emph{Case (b).} Let $w$ and $z$ be two diametral points located on two non-pendant edges $ab$ and $a'b'$, respectively. By Lemma \ref{lemma:2shortest}, one can consider two shortest $w$-$z$ paths $P_1$ and $P_2$ such that $d={\rm diam}(\mathcal{N}_{\ell})=(|P_1|+|P_2|)/2$. Assume that $a,a'\in P_1$ and $b,b'\in P_2$ (a similar argument would be used for $a,b'\in P_1$ and $b,a'\in P_2$).
If a path passing through $u,u'$ and containing a segment $pp'\in {\cal P}(m)$ decreases the value $(|P_1|+|P_2|)/2$ then either $d(a,a')$ or $d(b,b')$ is decreased, say $d(a,a')$ (analogous for $d(b,b')$). Thus,
$$d(b,a)+d(a,u)+d(u,p)+d(p,p')+d(p',u')+d(u'a')+d(a',b')+d(b',b) < 2d$$
Let $d_2=d(b,a)+d(a,u)+d(u'a')+d(a',b')+d(b',b)$ where we assume that  $d(a,u)+d(a',u') \leq d(a,u')+d(a',u)$ (otherwise we take this last value for $d_2$). Hence,
\begin{equation}\label{parabola2}
d(u,p)+d(p,p')+d(p',u')<2d-d_2
\end{equation}

\noindent \emph{Case (c).} Let $w,z\in \mathcal{N}_{\ell}$ be diametral points such that $w$ is the pendant vertex of a pendant edge $ww'$, and $z$ is a point on a non-pendant edge $ab$. This case is a combination of cases (a) and (b); by doing similar assumptions,  we obtain
\begin{equation}\label{parabola3}
d(u,p)+d(p,p')+d(p',u')<2d-d_3
\end{equation}
where $d_3=2d(w,w')+d(w',u)+d(u',a)+d(a,b)+d(b,w')$.

\,

Observe that for a fixed pair $(w,z)$, Inequations (\ref{parabola1}), (\ref{parabola2}), and (\ref{parabola3}) give us the interior points $(t,t')$ (corresponding to points $p,p'$) of certain conics $Q_{u,u'}^{w,z}$. They are conics because $d(u,p)$ and $d(u',p')$ are linear functions of $t$ and $t'$, $$d(u,p)=(1-t)d(u,v) \quad {\rm and} \quad d(u',p')=(1-t')d(u,v').$$
We can analogously construct the corresponding conics $Q_{u,v'}^{w,z}$, $Q_{v,u'}^{w,z}$, and $Q_{v,v'}^{w,z}$ for paths passing through $u,v'$ or $v,u'$ or $v,v'$ (instead of $u,u'$). When considering all types of diametral pairs $(w,z)$ we obtain an arrangement $\mathcal{Q}$ of conics.

Suppose now that there is a cell $\mathcal{C}$ in $\mathcal{Q} \cap  {\cal R}(m)$ that is contained, per each diametral pair, in at least one of its four associated conics. Then, all points in   $\mathcal{C}$ would represent
segments $pp'\in {\cal P}(m)$ that decrease the distance of all the diametral pairs of points on $\mathcal{N}_{\ell}$. Nevertheless, this would not imply that those segments $pp'$ are shortcuts since it must be guaranteed that the maximum eccentricity of the points on $pp'$ is smaller than $d$ (condition (ii) above). To do this, we construct another arrangement of conics $\mathcal{Q}'$ as follows.

Let $ab$ be a non-pendant edge of ${\cal N}$. Suppose that there is a segment $pp'\in {\cal P}(m)$ such that $\mathcal{N}_{\ell}\cup pp'$ has two diametral points placed, respectively, on $ab$ and on segment $pp'$. Take the two paths $P_1$ and $P_2$ of Lemma \ref{lemma:2shortest} and, without loss of generality, suppose that $a,p\in P_1$ and $b,p'\in P_2$. This implies that either $u$ or $v$ are on $P_1$ and either $u'$ or $v'$ are on $P_2$. Assume that $u\in P_1$ and $u'\in P_2$. The argument is analogous for the remaining cases.
Since the maximum eccentricity of the points on $pp'$ must be smaller than $d$ then
$$\frac{|P_1|+|P_2|}{2}<d \Rightarrow d(u,p)+d(p,p')+d(p',u')<2d-d_4$$
where $d_4=d(b,a)+d(a,u)+d(u',b)$. Similarly, given a non-pendant vertex $r\in V(\mathcal{N})$, if $r$ is diametral to some point on $pp'$ then
$$d(u,p)+d(p,p')+d(p',u')<2d-d_5$$ where $d_5=d(r,u)+d(r,u')$ (assuming that $r,u\in P_1$ and $r,u'\in P_2$).
It is easy to check that for pendant vertices, we obtain similar equations to the  previous ones. Thus, condition (ii) above is captured again by certain conics which give rise to the arrangement $\mathcal{Q}'$. Every vertex and every edge of $\mathcal{N}$ has an associated conic of $\mathcal{Q}'$. The interior points of a conic associated to, say a non-pendant edge, correspond to segments with maximum eccentricity with respect to that edge smaller than $d$.

We can conclude that there is a segment of ${\cal P}(m)$ that is a simple shortcut for $\mathcal{N}_{\ell}$ if and only if there exists a cell in $\mathcal{Q} \cap \mathcal{Q}' \cap {\cal R}(m)$ that is contained in all conics of  $\mathcal{Q}'$ and, per each diametral pair, in at least one of its four associated conics of $\mathcal{Q}$. Any point in such a cell represents a segment that is a simple shortcut for $\mathcal{N}_{\ell}$.

\,

\noindent \emph{Complexity.} As it was mentioned before, the set of diametral pairs of points on $\mathcal{N}_{\ell}$ is at most quadratic. Further, the sets of vertices and edges are linear and so the set of values needed to obtain our arrangements $\mathcal{Q}$ and $\mathcal{Q}'$ is polynomial. Those values depend on distances between vertices of $\mathcal{N}$, and so their computation can be done in polynomial time \cite{CG82} as well as the computation of the arrangements $\mathcal{Q}$ and $\mathcal{Q}'$ \cite{EGPPSS92}. Since ${\cal R}(m)$ is bounded by two monotone chains,  $\mathcal{Q} \cap \mathcal{Q}' \cap {\cal R}(m)$ can also be obtained in polynomial time.
The analysis of step (2) must be performed at most in the $O(n^2)$ regions ${\cal P}_{e,e'}(m)$ for each of the $O(n^2)$ pairs of edges $e,e'$ (regions obtained in step (1)).  Thus, the process is polynomial and the result follows.
\end{proof}

The following theorem is the general version of the preceding proposition; it includes non-simple shortcuts.

\begin{theorem} \label{theorem:algorithm_shortcut}
For every plane Euclidean network $\mathcal{N}$, it is possible to determine in polynomial time whether there exists a shortcut for $\mathcal{N}_{\ell}$ and, in that case, to compute it.
\end{theorem}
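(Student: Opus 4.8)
The plan is to reduce the case of general (possibly non-simple) shortcuts to the machinery already developed for simple shortcuts in Proposition~\ref{theorem:algorithm_shortcut-simple}. A shortcut $s=pp'$ that is not simple crosses some sequence of edges $e=e_0,e_1,\dots,e_k=e'$ of $\mathcal N$, in this order along $s$; the key observation is that the combinatorial structure of $s$ is captured by this ordered sequence of edges together with the ``side'' information of how $s$ meets each edge. First I would enumerate the candidate edge-sequences. As in step~(1) of Proposition~\ref{theorem:algorithm_shortcut-simple}, the direction of a line is determined up to equivalence by which of the $2n$ horizontal segments of $\mathcal H$ it meets, and there are $O(n^2)$ equivalence classes of directions; for a fixed direction class, sweeping a line of that direction across $\mathcal N_\ell$ meets the $O(n)$ edges in a fixed order, so the line passes through only $O(n)$ maximal ``corridors'', each corridor being an ordered list of consecutively crossed edges. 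Hence there are only $O(n^3)$ relevant ordered edge-sequences $(e_0,\dots,e_k)$ to consider, one hourglass-type region $\mathcal P_{e_0,\dots,e_k}(m)$ per corridor, exactly as before but now with $k$ intermediate edges rather than none.

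Next I would set up the coordinates inside such a region. A segment $s$ in $\mathcal P_{e_0,\dots,e_k}(m)$ is still determined by its two endpoints $p=ut+(1-t)v\in e_0$ and $p'=u't'+(1-t')v'\in e_k$, so the search space is again a planar region $\mathcal R(m)\subseteq\mathbb R^2$ bounded by two monotone polygonal chains (the constraint that $s$ actually crosses every one of $e_1,\dots,e_{k-1}$, and no other edge, cuts out a convex-in-each-coordinate, hourglass-shaped region in the $(t,t')$-plane, just as in the simple case). The essential new feature is that inserting $s$ creates new intersection vertices $x_1\in e_1,\dots,x_{k-1}\in e_{k-1}$, and the new network $\mathcal N_\ell\cup s$ must have its diameter recomputed using Lemma~\ref{lemma:algorithm-diameter} applied to this augmented network. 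Because each $x_i$ lies on the line $m$, its position is an affine function of $(t,t')$, and each sub-segment $x_ix_{i+1}$ of $s$ has length equal to an affine function of $(t,t')$; consequently every distance $d_{\mathcal N_\ell\cup s}(a,b)$ that is realised by a path using a contiguous piece of $s$ is again, on each cell of an arrangement recording ``which edges of $\mathcal N\cup s$ the shortest path uses,'' a linear function of $(t,t')$. Therefore conditions (i) ``$s$ decreases every diametral distance of $\mathcal N_\ell$'' and (ii) ``every point of $s$ has eccentricity $<d$ in $\mathcal N_\ell\cup s$'' translate, exactly as in the proof of Proposition~\ref{theorem:algorithm_shortcut-simple}, into membership of $(t,t')$ in suitable cells of an arrangement $\mathcal Q\cap\mathcal Q'$ of conics, and we search for a feasible cell in $\mathcal Q\cap\mathcal Q'\cap\mathcal R(m)$. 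Running this over all $O(n^3)$ corridors still takes polynomial time.

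The main obstacle — and the only place where genuinely new work is needed beyond citing Proposition~\ref{theorem:algorithm_shortcut-simple} — is controlling the shortest-path structure in the augmented network $\mathcal N_\ell\cup s$, because a shortest path may now enter and leave $s$ repeatedly, use some internal pieces $x_ix_{i+1}$ and skip others, and route through the newly created degree-$4$ vertices $x_i$. One must argue that, after fixing the ordered edge-sequence (which fixes the combinatorial type of $\mathcal N\cup s$ as an abstract weighted graph, with edge weights that are affine in $(t,t')$), the set of ``shortest-path patterns'' that can arise is of polynomial size and induces only polynomially many algebraic pieces, so that the distance and eccentricity functions are piecewise of the required conic form; here Lemma~\ref{lemma:2shortest} is applied to $\mathcal N\cup s$ to pin down that diametral points on $s$ are witnessed by two shortest paths leaving $s$ from its two sides, which keeps the case analysis finite. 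I would also note the subtlety that a non-simple shortcut might have an endpoint $p$ lying on an edge that it also crosses in its interior, or that two of the crossed edges might be collinear with $m$; these degenerate configurations are handled by the usual genericity perturbation of the directions of $\mathcal H$ already invoked in Proposition~\ref{theorem:algorithm_shortcut-simple}. Once these points are settled, the statement follows by the same cell-search argument, and I would present the proof by describing the corridor enumeration in detail and then saying ``the remainder is identical to the proof of Proposition~\ref{theorem:algorithm_shortcut-simple}, with $\mathcal N$ replaced by $\mathcal N\cup s$ throughout.''
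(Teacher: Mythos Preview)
Your overall architecture matches the paper's: enumerate a polynomial family of ``corridors'' $\mathcal P_{e_0,\dots,e_k}(m)$, parameterise a candidate segment by the pair $(t,t')$ giving its endpoints on the extremal edges, and then test feasibility by intersecting constraint regions inside $\mathcal R(m)$. The paper does exactly this, indexing the regions by the two extremal edges $e,e'$ and the equivalence class of $m$, and then considering all $2(k+2)(k+1)$ ordered pairs of endpoints of the intermediate edges when writing the analogues of Inequations~(\ref{parabola1})--(\ref{parabola3}).

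However, there is a genuine gap in your argument at the step where you assert that ``each $x_i$ lies on the line $m$, its position is an affine function of $(t,t')$, and each sub-segment $x_ix_{i+1}$ of $s$ has length equal to an affine function of $(t,t')$.'' This is false. As $(t,t')$ varies over $\mathcal R(m)$, the line through $p$ and $p'$ is not fixed (it ranges over the whole equivalence class of $m$), and its intersection with a fixed intermediate edge $e_i$ is a \emph{projective} (rational) function of $(t,t')$, not an affine one. Consequently $d(u_i,p_i)$ and the lengths of the sub-segments of $s$ are not linear in $(t,t')$, and the constraint inequalities
\[
d(u_i,p_i)+d(p_i,p'_j)+d(p'_j,u_j)<c
\]
do \emph{not} describe conics. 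The paper singles out precisely this point as the essential difficulty: ``the above inequations do not necessarily describe conics \dots\ because the terms $d(u_i,p_i)$ and $d(p'_j,u_j)$ may not be linear functions of $t$ and $t'$.''

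The paper's actual new contribution --- and the work your proposal is missing --- is a direct geometric proof that each such constraint region is nevertheless \emph{convex} in the $(t,t')$-plane. This is done by taking two feasible segments $p_1p_1'$ and $p_2p_2'$ (both shortening $d(w,z)$ via the same pair $u_i,u_j$) and showing that every segment on the interpolating family also shortens $d(w,z)$; the argument splits into two cases according to whether $u_i$ and $u_j$ lie on the same side or on opposite sides of the segments, and in the second case uses a midpoint property of quadrilaterals. Once convexity is established, one computes an arrangement of convex sets (rather than conics) intersected with $\mathcal R(m)$, which is still polynomial. The issue you flagged as the ``main obstacle'' --- enumerating shortest-path patterns in the augmented network --- is in fact routine and absorbed into the polynomial count of endpoint pairs; the real obstacle is the non-linearity you overlooked.
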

\begin{proof}
 The argument is essentially the same as in the proof of Proposition \ref{theorem:algorithm_shortcut-simple} although there are some important differences on whose description we focus next. Throughout this proof, steps (1) and (2) refer to the corresponding steps in the proof of  Proposition \ref{theorem:algorithm_shortcut-simple}.

 Since the shortcut may be non-simple, when constructing the regions ${\cal P}_{e,e'}(m)$ in step (1), we cannot assume that line $m$ intersects no other edges in between $e$ and $e'$ but it may intersect $k$ edges in between them, say a set $I=\{e_0=e, e_1, \ldots , e_{k+1}=e'\}$ of edges with $e_i=u_iu'_i$ (counting $e$ and $e'$). This implies that, for a candidate segment $pp'\in {\cal P}_{e,e'}(m)$ to be a shortcut, different diametral pairs $(w,z)$ and $(w',z')$ might use different fragments of segment $pp'$ in order to decrease $d={\rm diam}(\mathcal{N}_{\ell})$. For example,  one can have a $w$-$z$ path decreasing $d=d(w,z)$ that connects $w$ with $u_i$, uses the fragment of $pp'$ in between $e_i$ and $e_j$ and then connects $u'_j$ with $z$. There could also be a $w'$-$z'$ path decreasing $d=d(w',z')$ using another fragment of $pp'$. When trying to mimic the construction of the arrangements $\mathcal{Q}$ and $\mathcal{Q}'$ in step (2), this fact  generates the main differences with the proof of Proposition \ref{theorem:algorithm_shortcut-simple}. Nevertheless, we still have the $O(n^2)$ regions ${\cal P}_{e,e'}(m)$ for each two edges $e,e'\in E(\mathcal{N})$. For short, we again use ${\cal P}(m)$ instead of ${\cal P}_{e,e'}(m)$. Also, for each region ${\cal P}(m)$, a region ${\cal R}(m)\subseteq \mathbb{R}^2$ can be constructed as in step (2), and a segment $pp'\in {\cal P}(m)$ can be represented by a point $(t,t')\in {\cal R}(m)$.

For constructing arrangement $\mathcal{Q}$ in step (2), we consider the four combinations of endpoints of $e$ and $e'$ (i.e., paths passing through $u,u'$ or $u,v'$ or $v,u'$ or $v,v'$) but here
 we must take the $2(k+2)(k+1)$ suitable combinations of endpoints of different pairs of edges of set $I$ (still a polynomial number).  This means that, for each pair of endpoints of those pairs of edges, we analyze (as it was done in step (2)) the inequations obtained for a candidate segment $pp'\in {\cal P}(m)$ to be a shortcut which, in this general case, are of the form:
 $$d(u_i,p_i)+d(p_i,p'_j)+d(p'_j,u_j)<c$$
 where we have taken as an example (in order to show the type of inequation) endpoints $u_i$ of $e_i$ and $u_j$ of $e_j$. Further,
 $p_i$ and $p'_j$ are the intersection points of segment $pp'$ with edges $e_i$ and $e_j$, respectively; value $c$ is a constant that depends on $d={\rm diam}(\mathcal{N}_{\ell})$ as well as on several distances obtained by reasoning as in step (2) when considering the different types of diametral pairs.

The above inequations do not necessarily describe conics (as in step (2)) because the terms $d(u_i,p_i)$ and $d(p'_j,u_j)$ may not be linear functions of $t$ and $t'$, respectively. Clearly, the same happens for arrangement $\mathcal{Q}'$ since the inequations obtained for this arrangement are of the same type.
 Nevertheless, we shall show that the situation can be handled in a similar fashion of Proposition \ref{theorem:algorithm_shortcut-simple} by proving, as follows, that those loci of points are convex sets.

Let $(w,z)$ be a diametral pair of points. Suppose that  $d(w,z)=d$ is decreased by using a path $T_1$ passing through vertices $u_i$ and $u_j$ and containing the fragment of a segment $p_1p'_1\in{\cal P}(m)$ in between $e_i$ and $e_j$; this segment is represented by a point $(t_1,t_1')\in {\cal R}(m)$. Assume also that $d(w,z)=d$ is decreased by another path $T_2$ passing through the same vertices $u_i$ and $u_j$ and containing the fragment of a segment $p_2p'_2\in{\cal P}(m)$ in between $e_i$ and $e_j$; segment $p_2p'_2$ is represented by a point $(t_2,t_2')\in {\cal R}(m)$. To prove that the inequations corresponding to arrangement $Q$ describe convex sets, it suffices to show that every point $(t,t')$ on the segment with endpoints $(t_1,t_1')$ and $(t_2,t_2')$  represents a segment $pp'\in {\cal P}(m)$ such that  $d(w,z)=d$ is decreased by using a path, say $T$, passing through vertices $u_i$ and $u_j$ and containing the fragment of $pp'$ in between $e_i$ and $e_j$. To do this, we distinguish two cases.

\,

\noindent \emph{Case (a).} Vertices $u_i$ and $u_j$ are located at the same side of $p_1p'_1$ and $p_2p'_2$; see Figure \ref{fig:scg1}.

The result is straightforward when either $p_1p'_1$ and $p_2p'_2$ do not intersect or they do but on a point located outside the region defined by $e_i$ and $e_j$; see Figure \ref{fig:scg1}(a). Otherwise, all points on the segment with endpoints $(t_1,t_1')$ and $(t_2,t_2')$ represent segments in ${\cal P}(m)$ that pass through the intersection point of $p_1p'_1$ and $p_2p'_2$; see Figure \ref{fig:scg1}(b). Suppose that this intersection point is under the bisector defined by $e_i$ and $e_j$ as Figure \ref{fig:scg1}(b) shows. By comparing triangles, one can check that a path $T$ that uses the fragment of segment $pp'$ in between $e_i$ and $e_j$ is shorter than $T_2$ (similar for the intersection point over the bisector but using path $T_1$). Hence, $T$ decreases $d(w,z)=d$ since $T_2$ does.

\begin{figure}[ht]
\begin{center}
\begin{tabular}{cc}
\includegraphics[width=0.48\textwidth]{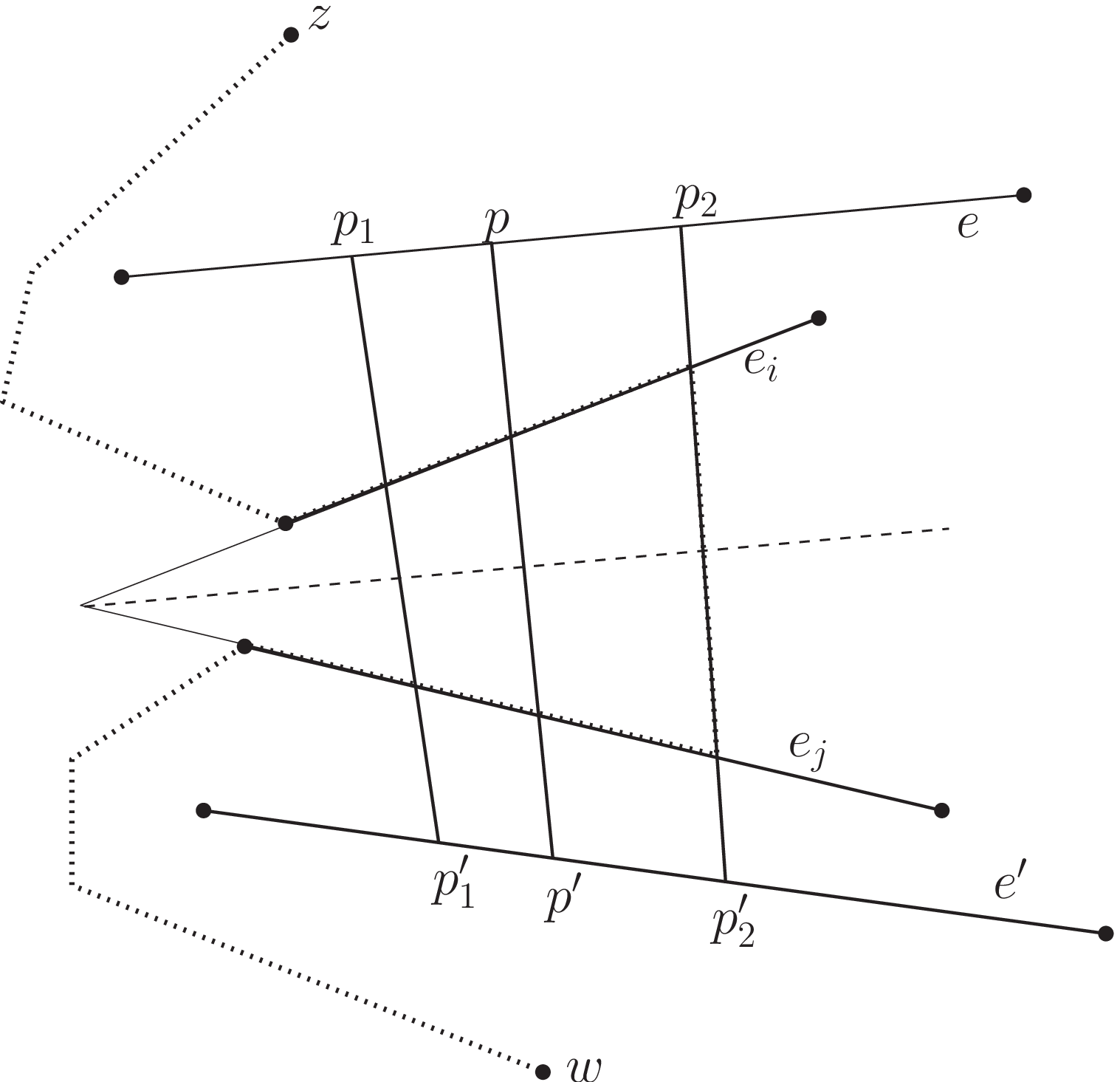}
&
\includegraphics[width=0.48\textwidth]{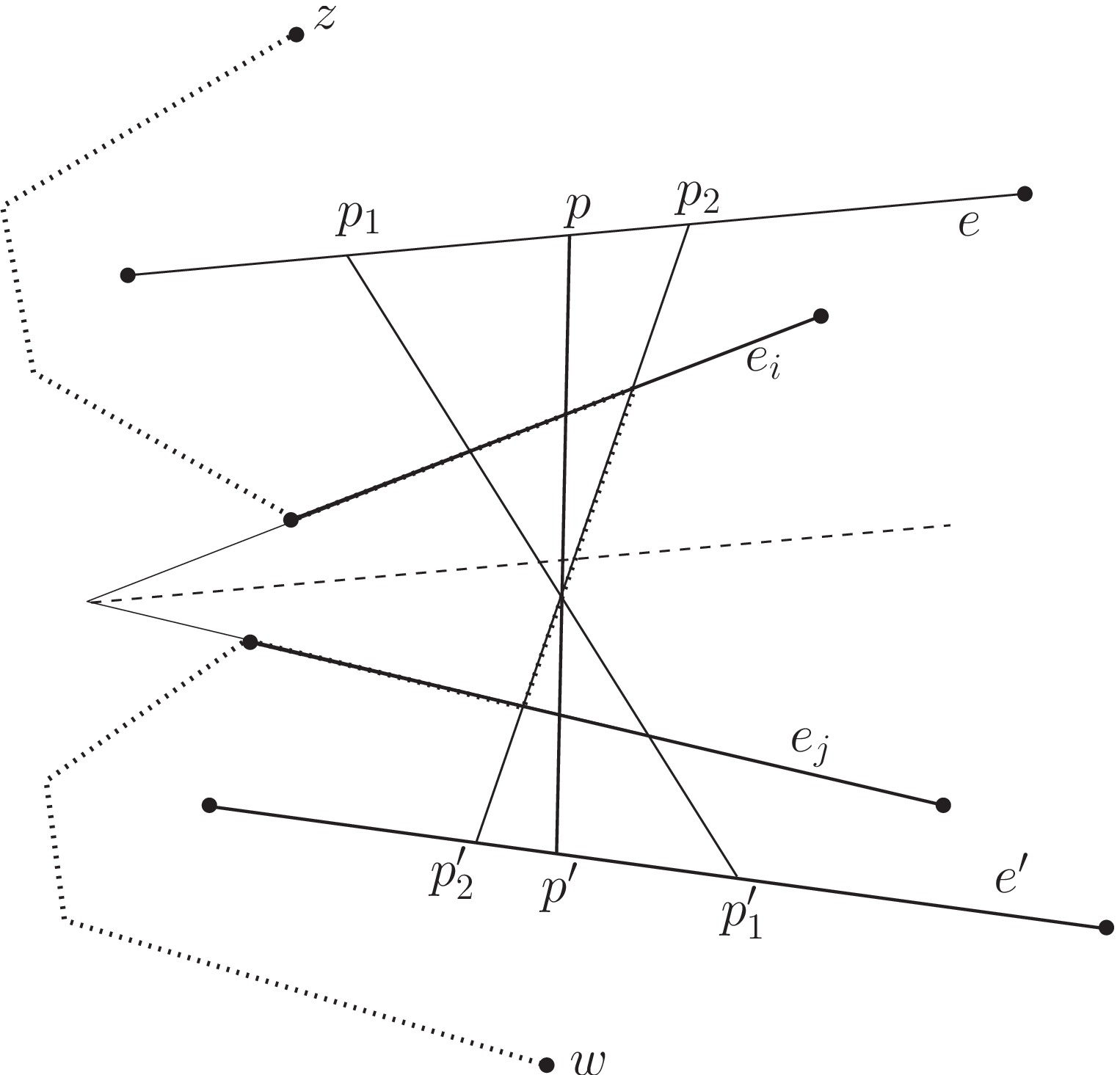}\\
(a)&(b)
\end{tabular}
\caption{(a) Segments $p_1p'_1$ and $p_2p'_2$ do not intersect, (b) segments $p_1p'_1$ and $p_2p'_2$ intersect on a point located in the region defined by $e_i$ and $e_j$.}\label{fig:scg1}
\end{center}
\end{figure}

\,

\noindent \emph{Case (b).} Vertices $u_i$ and $u_j$ are located at different sides of $p_1p'_1$ and $p_2p'_2$; see Figure \ref{fig:scg3}.

\begin{figure}[ht]
\begin{center}
\begin{tabular}{cc}
\includegraphics[width=0.48\textwidth]{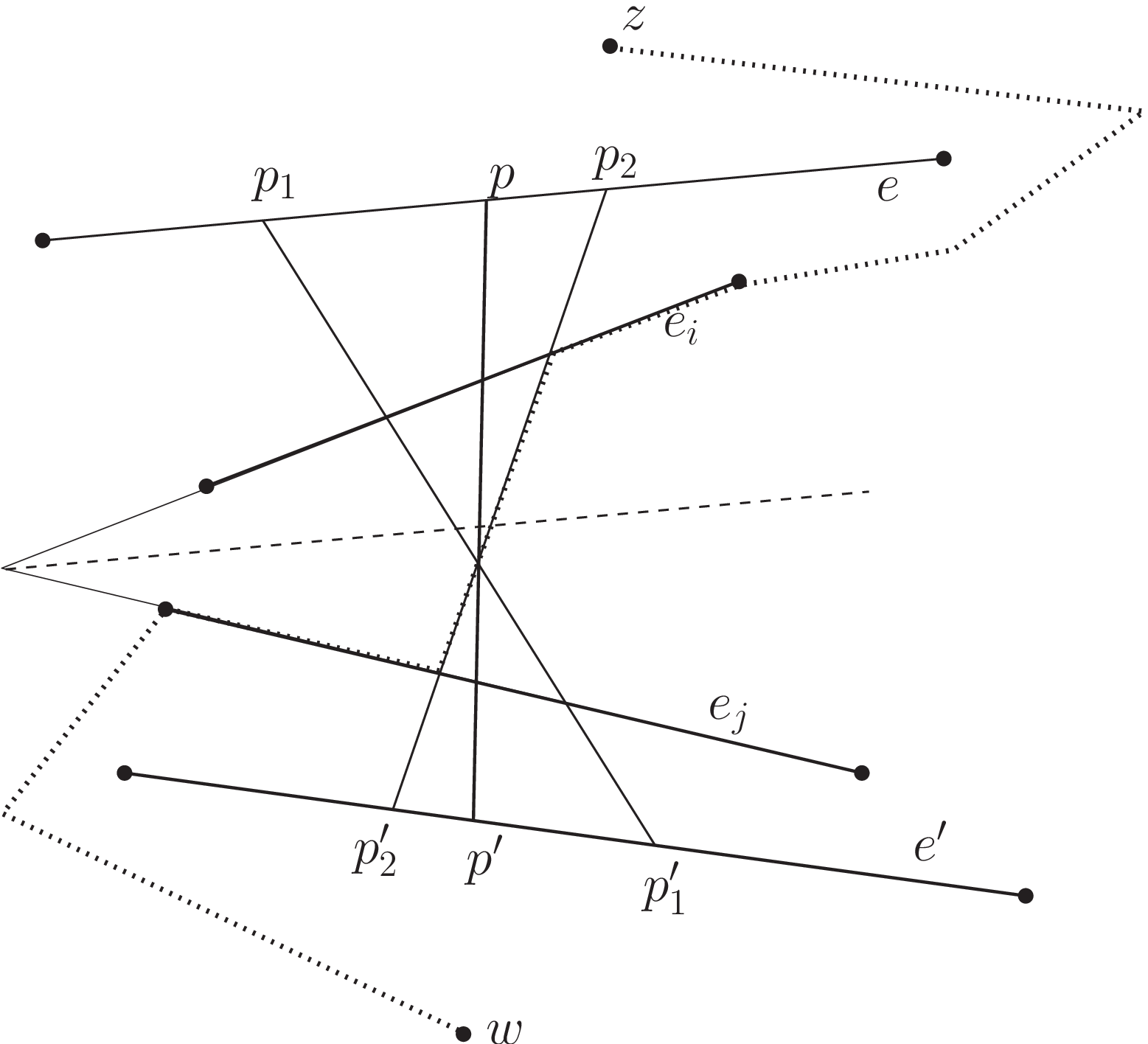}
&
\includegraphics[width=0.48\textwidth]{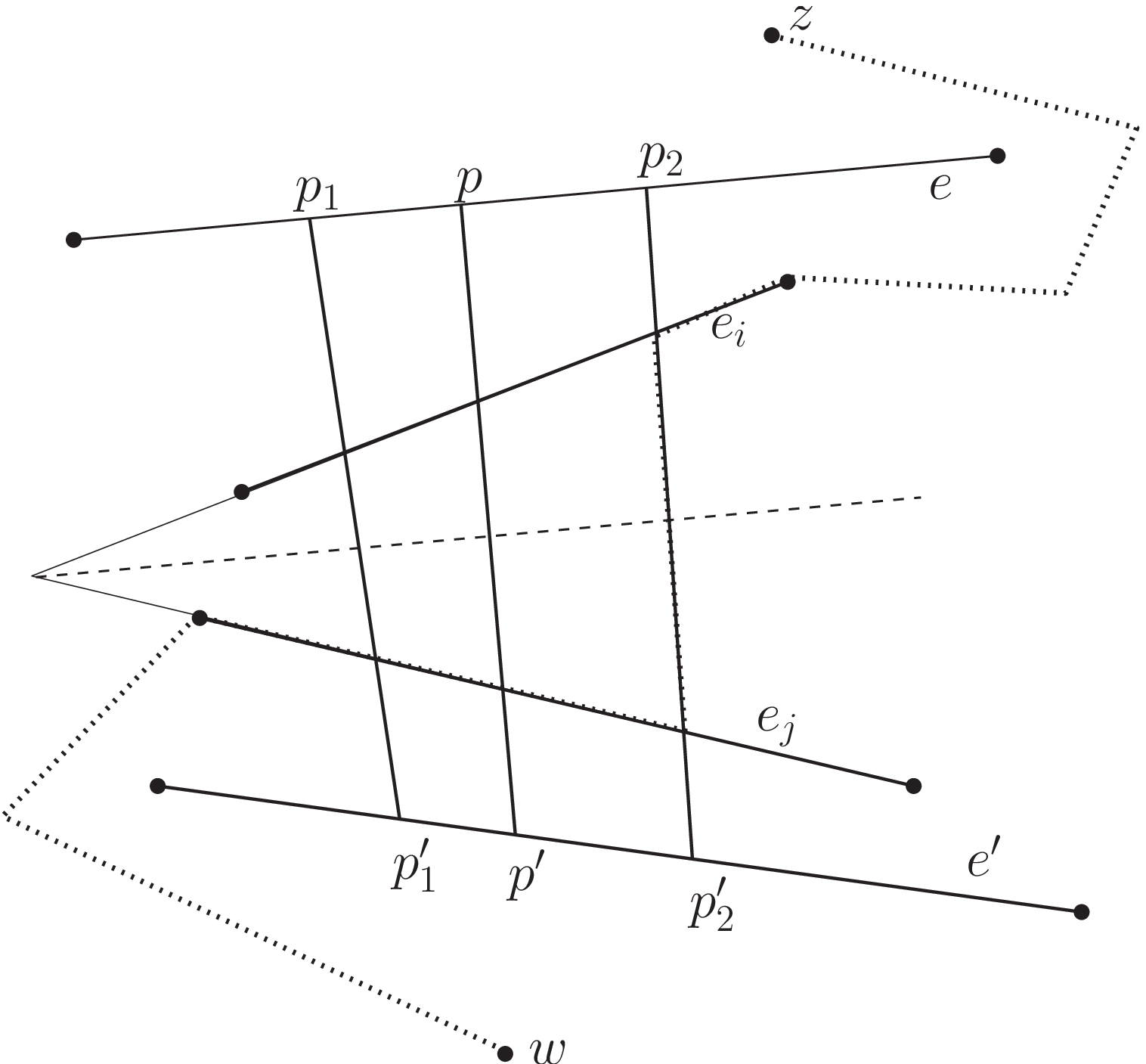}\\
(a)&(b)
\end{tabular}
\caption{Segments $p_1p'_1$ and $p_2p'_2$ intersect in (a) and do not intersect in (b).}\label{fig:scg3}
\end{center}
\end{figure}

If  $p_1p'_1$ and $p_2p'_2$ intersect then the result is straightforward since points $(t,t')$ represent segments that pass through that intersection point; see Figure \ref{fig:scg3}(a). Otherwise, consider a point  $(t,t')$  on the segment with endpoints  $(t_1,t_1')$ and $(t_2,t_2')$, i.e., $(t,t')=\lambda (t_1,t_1')+(1-\lambda)(t_2,t_2')$. % Then, $$\frac{|t_2-t_1|}{t} =\frac{|t_2'-t_1'|}{t'}=k.$$

Suppose first that  $\lambda=1/2$. A well-known property of a quadrilateral is that if the midpoints of opposite edges are connected (points $a'$ and $d'$ in Figure \ref{fig:quad}), then the length of the path $ba'd'd$ is smaller or equal than the length of at least one of the two paths in the quadrilateral connecting $b$ with $d$. Therefore, path $T$ is shorter than $T_1$ or $T_2$ and so it also decreases $d(w,z)=d$. For $\lambda\neq 1/2$, we can apply successively the same property, starting with either the quadrilateral with vertices $a,a',d,d'$ or  that with vertices $a',b,d'c$.

\begin{figure}[ht]
\begin{center}
\includegraphics[width=0.35\textwidth]{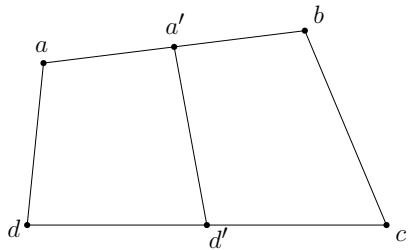}
\caption{A quadrilateral with vertices $a,b,c,d$ in which the midpoints $a'$ and $d'$ of, respectively, edges $ab$ and $cd$ are connected.}\label{fig:quad}
\end{center}
\end{figure}

Thus, we can conclude that the inequations corresponding to arrangement $\mathcal{Q}$ describe convex sets. One can analogously prove that arrangement $\mathcal{Q}'$ consists also of convex sets. Since the process is very similar, we omit the details and only specify the convexity condition in this case:  consider a segment $p_1p'_1\in{\cal P}(m)$ (which is a point $(t_1,t_1')\in {\cal R}(m)$) and suppose that the maximum eccentricity of the points on $p_1p'_1$ (in the network $\mathcal{N}_{\ell}\cup p_1p'_1$) is smaller than $d$. Further, as it was done in step (2),  assume that $u_i\in P_1$ and $u_j\in P_2$ for the corresponding paths $P_1$ and $P_2$ of Lemma \ref{lemma:2shortest} that give that maximum eccentricity. The same assumptions are done for another segment $p_2p'_2\in{\cal P}(m)$ (which is a point $(t_2,t'_2)\in {\cal R}(m)$). Thus, the convexity condition for $\mathcal{Q}'$ is that every point $(t,t')$ on the segment with endpoints $(t_1,t_1')$ and $(t_2,t_2')$  represents a segment $pp'\in {\cal P}(m)$ such that the maximum eccentricity of the points on $pp'$ (in the network $\mathcal{N}_{\ell}\cup pp'$) is smaller than $d$, assuming also that $u_i\in P_1$ and $u_j\in P_2$ for the corresponding paths $P_1$ and $P_2$ of Lemma \ref{lemma:2shortest} that give the maximum eccentricity.

Regarding the complexity, the only difference with the proof of Proposition \ref{theorem:algorithm_shortcut-simple} is that, instead of analyzing the intersection of two arrangements of conics with region ${\cal R}(m)$, we compute the intersection of two arrangements of convex sets with that region, which can also be done in polynomial time \cite{EGPPSS92} and so the result follows.
\end{proof}

\section{Shortcut number}\label{sec:families}

There are two natural, interesting but very hard questions related with shortcut sets: which is the minimum number of segments that have to be inserted to $\mathcal{N}_{\ell}$ in order to reduce the diameter? Among all shortcut sets of minimum size, how can we find the one that minimizes the diameter? It seems difficult to give an answer for general plane Euclidean networks and, of course, new techniques must be developed. In this section we briefly describe our results on these questions, among which highlights the NP-completeness of the problem of deciding whether the minimum size of a shortcut set is smaller or equal than a fixed natural number.

Let $\mathcal{N}$ be a  network such that $\mathcal{N}_{\ell}$ admits a shortcut set (according to Theorem \ref{characterization}). We define the {\em shortcut number}  of $\mathcal{N}_{\ell}$, denoted by  ${\rm scn}(\mathcal{N}_{\ell})$,
as the minimum size of a shortcut set. A shortcut set is called {\em optimal} if its size equals ${\rm scn} (\mathcal{N}_{\ell})$ and it minimizes
${\rm diam}(\mathcal{N}_{\ell} \cup {\cal S}) $ among all shortcut sets ${\cal S}$ of size ${\rm scn}(\mathcal{N}_{\ell})$. This notion of optimal shortcut set includes the optimal shortcut of Yang \cite{yang}, and has the same spirit as the optimal set of shortcuts of De Carufel et al.~\cite{DCGMS15}.

\begin{proposition} Let $\mathcal{N}$ be a plane Euclidean network whose locus $\mathcal{N}_{\ell}$ admits a shortcut set. Then, the following statements hold.
\item[(i)] ${\rm scn} (\mathcal{N}_{\ell})\leq 2|E(\mathcal{N})|-n_1$, where $n_1$ is the number of pendant vertices of $\mathcal{N}$.
\item[(ii)] $\mathcal{N}_{\ell}$ has an optimal shortcut set.
\end{proposition}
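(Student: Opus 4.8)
Part (i) is immediate: Corollary~\ref{bound} already produces, in polynomial time, a shortcut set of size at most $2|E(\mathcal{N})|-n_1$, and by definition ${\rm scn}(\mathcal{N}_{\ell})$ is the minimum size of a shortcut set, hence at most the size of this particular one. So the only real content is part (ii): the existence of an \emph{optimal} shortcut set, i.e.\ one whose size equals $k:={\rm scn}(\mathcal{N}_{\ell})$ and which, among all shortcut sets of size $k$, minimizes ${\rm diam}(\mathcal{N}_{\ell}\cup{\cal S})$. The subtlety is that the infimum of ${\rm diam}(\mathcal{N}_{\ell}\cup{\cal S})$ over size-$k$ shortcut sets might \emph{a priori} not be attained, since the space of such sets is a (non-compact) subset of a Euclidean parameter space.

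First I would parametrize size-$k$ shortcut sets. A segment $s_1$ with endpoints on $\mathcal{N}_{\ell}$ is specified by a point of $\mathcal{N}_{\ell}\times\mathcal{N}_{\ell}$; once $s_1,\dots,s_{i-1}$ are fixed, $s_i$ is specified by a point of $(\mathcal{N}_{\ell}\cup s_1\cup\dots\cup s_{i-1})^2$. Since $\mathcal{N}_{\ell}$ is compact (it is a bounded closed point set, as noted in the proof of the previous theorem) and each added segment is compact, the set $X_k$ of all ordered $k$-tuples of segments satisfying the endpoint constraints of Problem~\ref{ourproblem} is a compact subset of a finite-dimensional Euclidean space; I would check that the defining (closed) incidence conditions are preserved under limits, so $X_k$ is indeed closed, hence compact. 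Next I would argue that the map $\Phi\colon X_k\to\mathbb{R}$, $\Phi({\cal S})={\rm diam}(\mathcal{N}_{\ell}\cup{\cal S})$, is continuous: distances $d_{\mathcal{N}_{\ell}\cup{\cal S}}(p,q)$ vary continuously with the endpoints of the $s_i$ (a shortest path is a finite concatenation of edges and edge/segment fragments, and the length of each piece is a continuous function of the data), and the diameter is a max of such distances over a compact set, hence continuous; this is the same style of compactness-and-continuity reasoning already used in the proof of the theorem preceding Section~3.

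Let $v=\inf\{\,\Phi({\cal S}) : {\cal S}\in X_k,\ \Phi({\cal S})<{\rm diam}(\mathcal{N}_{\ell})\,\}$, the infimum over size-$k$ shortcut sets. Take a minimizing sequence ${\cal S}^{(m)}\in X_k$ with $\Phi({\cal S}^{(m)})\to v$; by compactness of $X_k$ pass to a convergent subsequence ${\cal S}^{(m)}\to{\cal S}^\ast\in X_k$, so by continuity $\Phi({\cal S}^\ast)=v$. The only thing to rule out is $v={\rm diam}(\mathcal{N}_{\ell})$ (which would make ${\cal S}^\ast$ fail to be a shortcut set): but $v$ is the infimum over sets that \emph{are} shortcut sets, and since such sets exist (hypothesis, via Theorem~\ref{characterization}) and for any one of them $\Phi<{\rm diam}(\mathcal{N}_{\ell})$, we get $v<{\rm diam}(\mathcal{N}_{\ell})$; hence $\Phi({\cal S}^\ast)<{\rm diam}(\mathcal{N}_{\ell})$ and ${\cal S}^\ast$ is a genuine shortcut set of size $k={\rm scn}(\mathcal{N}_{\ell})$ attaining the minimum diameter, i.e.\ an optimal shortcut set.

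The step I expect to be the main obstacle is the continuity of $\Phi$, and more precisely the continuity (or at least upper semicontinuity) of the point-to-point distance $d_{\mathcal{N}_{\ell}\cup{\cal S}}(\cdot,\cdot)$ as a function jointly of the segment parameters \emph{and} of the two points $p,q$ ranging over the varying set $\mathcal{N}_{\ell}\cup{\cal S}$. The combinatorial type of a shortest path can change in the limit (segments can degenerate to points, become collinear with edges, or pass through vertices), so one must argue that lengths still converge; I would handle this by noting that for the limit set ${\cal S}^\ast$ and any $p^\ast,q^\ast$, any path in $\mathcal{N}_{\ell}\cup{\cal S}^\ast$ is a limit of paths of the same or shorter length in $\mathcal{N}_{\ell}\cup{\cal S}^{(m)}$ (giving $\limsup d^{(m)}\le d^\ast$ after matching points) and conversely shortest paths in $\mathcal{N}_{\ell}\cup{\cal S}^{(m)}$ have bounded combinatorial length and subconverge to a path in $\mathcal{N}_{\ell}\cup{\cal S}^\ast$ of the limiting length (giving $\liminf d^{(m)}\ge d^\ast$). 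Combining these with the fact that the diameter is a supremum of $d$ over a compact set whose dependence on ${\cal S}$ is itself continuous in the Hausdorff metric yields continuity of $\Phi$ and completes the argument.
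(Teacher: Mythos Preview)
Your proof is correct and follows the same compactness-plus-continuity strategy as the paper. For part (i) both of you simply invoke Corollary~\ref{bound}. For part (ii), the paper parametrizes size-$h$ families of segments by the product $\mathcal{N}_{\ell}^{2h}$ (so all $2h$ endpoints lie on $\mathcal{N}_{\ell}$), asserts in one line that the diameter map is continuous on this compact domain, and concludes that its minimum is attained by a shortcut set. You work instead with the larger parameter space $X_k$ that allows the endpoints of $s_i$ to lie on earlier segments, in accordance with the full definition of shortcut set in Problem~\ref{ourproblem}; this forces you to check that $X_k$ is still closed (hence compact) and costs some extra care, but it buys coverage of the case in which an optimal configuration genuinely uses an endpoint on a previously inserted segment---a case the paper's simpler product domain does not literally contain. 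Your extended discussion of the continuity of $\Phi$ (handling degenerations and combinatorial changes of shortest paths) is likewise more careful than the paper's bare assertion, but the underlying argument is the same.
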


\begin{proof}
The bound of statement (i) is given in Corollary \ref{bound}.
To prove statement (ii), suppose that ${\rm scn}(\mathcal{N}_{\ell})=h$, and consider the mapping $$f: \overbrace{\mathcal{N}_{\ell} \times \dots \times \mathcal{N}_{\ell}}^{{\tiny 2h)}} \longrightarrow \mathbb{R}, \hspace{0.5cm} f(p_1,q_1,p_2,q_2,\dots ,p_h,q_h)={\rm diam}(\mathcal{N}_{\ell} \cup \mathcal{S})$$ where $\mathcal{S}$ is the set of segments with endpoints $p_i, q_i$, $1\leq i\leq h$. Since this is a continuous mapping taking values on a compact set, it has a minimum. On the other hand,   $\mathcal{N}_{\ell}$  admits a shortcut set and so  the minimum of $f$ must be attained by an optimal shortcut set.
\end{proof}

\begin{remark} One can check that the {\em star} $S_n$ with odd $n$, given by $V(S_n)=\{u,u_0,...,u_{n-1}\}$,  $E(S_n)=\{uu_i:0\leq i\leq n-1\}$, and vertex $u_i$ placed at
$(\cos(\frac{2\pi}{n}i),\sin(\frac{2\pi}{n}i))$ attains the upper bound of the preceding proposition.\end{remark}

%%%%%%%%%%%%%%%%%%%%%%%%%%%%%%%%%%%%%

Regarding the shortcut number, it is interesting to consider non-connected networks $\mathcal{N}$; note that all our previous results on this paper deal with connected networks. Obviously, when $\mathcal{N}$ is non-connected, ${\rm diam}(\mathcal{N}_{\ell})$ is infinite and a shortcut set is simply  a set of segments that connects $\mathcal{N}_{\ell}$. Thus, the following proposition is straightforward.

\begin{proposition}
Let $\mathcal{N}$ be a non-connected plane Euclidean network, and let $\mathcal{N}_{\ell}^1, \ldots , \mathcal{N}_{\ell}^k$ be the connected components of its locus. Then, ${\rm scn} (\mathcal{N}_{\ell})=1$ if and only if the set  $\{ CH(\mathcal{N}_{\ell}^1), \ldots , CH(\mathcal{N}_{\ell}^k) \}$ admits a stabbing line, i.e., a straight line that intersects each of the $k$ given convex hulls $CH(\mathcal{N}_{\ell}^i)$.
\end{proposition}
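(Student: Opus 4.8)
The plan is to prove both implications of the equivalence by relating a single shortcut segment for the non-connected locus to a stabbing line for the convex hulls of the components.

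First I would observe that, since $\mathcal{N}$ is non-connected, ${\rm diam}(\mathcal{N}_{\ell})=\infty$, so a set $\mathcal{S}$ is a shortcut set precisely when $\mathcal{N}_{\ell}\cup\mathcal{S}$ is connected (this is exactly the sentence preceding the statement, which we may assume). Hence ${\rm scn}(\mathcal{N}_{\ell})=1$ means there is a single segment $s$ whose addition makes $\mathcal{N}_{\ell}\cup s$ connected. A single segment is a connected set meeting at most... well, its union with the $k$ components is connected if and only if $s$ intersects every component $\mathcal{N}_{\ell}^i$ (if $s$ missed some component, that component would remain a separate connected piece; conversely if $s$ meets all of them, the segment glues them into one connected set). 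So ${\rm scn}(\mathcal{N}_{\ell})=1$ iff there is a segment $s$ with $s\cap\mathcal{N}_{\ell}^i\neq\emptyset$ for every $i$.

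For the forward direction, suppose such a segment $s$ exists. Let $\ell$ be the line supporting $s$. For each $i$, pick a point $x_i\in s\cap\mathcal{N}_{\ell}^i$; then $x_i\in\ell$ and $x_i\in\mathcal{N}_{\ell}^i\subseteq CH(\mathcal{N}_{\ell}^i)$, so $\ell$ intersects each $CH(\mathcal{N}_{\ell}^i)$ and is a stabbing line. For the converse, suppose $\ell$ is a stabbing line for $\{CH(\mathcal{N}_{\ell}^1),\dots,CH(\mathcal{N}_{\ell}^k)\}$. The key point is to replace "meets the convex hull" by "meets the component itself along $\ell$": the line $\ell$ meets $CH(\mathcal{N}_{\ell}^i)=CH(V(\mathcal{N}^i))$, and I claim $\ell$ must also meet $\mathcal{N}_{\ell}^i$. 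Indeed, $\ell\cap CH(\mathcal{N}_{\ell}^i)$ is a nonempty segment (or point); since $\mathcal{N}^i$ is a connected plane network, $\mathcal{N}_{\ell}^i$ is a connected closed set whose convex hull is $CH(\mathcal{N}_{\ell}^i)$, and any line that enters the convex hull of a connected planar continuum must touch the continuum — more concretely, the two extreme vertices of $\mathcal{N}^i$ in the direction perpendicular to $\ell$ lie on opposite (closed) sides of $\ell$ (that is precisely what $\ell\cap CH=\ell\cap CH(V(\mathcal{N}^i))\neq\emptyset$ gives), and any path in $\mathcal{N}_{\ell}^i$ joining them, being connected, must cross $\ell$. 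Let $a_i$ be such a crossing point on $\ell\cap\mathcal{N}_{\ell}^i$. Finally take $s$ to be a segment of $\ell$ long enough to contain all the points $a_1,\dots,a_k$; then $s$ meets every component, and with the endpoints placed on (say) $\mathcal{N}_{\ell}^1$ we get a legitimate shortcut of size $1$, so ${\rm scn}(\mathcal{N}_{\ell})\le 1$, hence $=1$.

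The main obstacle is the step in the converse where I pass from "$\ell$ meets $CH(\mathcal{N}_{\ell}^i)$" to "$\ell$ meets $\mathcal{N}_{\ell}^i$". This needs the connectedness of each component together with the identity $CH(\mathcal{N}_{\ell}^i)=CH(V(\mathcal{N}^i))$ (noted in the excerpt): if $\ell$ did not meet $\mathcal{N}_{\ell}^i$ then $\mathcal{N}_{\ell}^i$ would lie strictly in one open half-plane bounded by $\ell$, hence so would $V(\mathcal{N}^i)$ and therefore $CH(V(\mathcal{N}^i))=CH(\mathcal{N}_{\ell}^i)$, contradicting that $\ell$ stabs it. A small extra remark is that one should place the endpoints of $s$ on $\mathcal{N}_{\ell}$ (required by the definition of shortcut set), which is harmless since we may extend $s$ so that its endpoints lie on, e.g., the first component.
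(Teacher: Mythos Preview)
Your argument is correct and is the natural elementary proof; the paper itself omits any proof, calling the proposition ``straightforward.'' One small imprecision: in the converse, after producing the crossing points $a_1,\dots,a_k\in\ell$, you cannot in general arrange for both endpoints of $s$ to lie on the \emph{same} component $\mathcal{N}_{\ell}^1$ (the line $\ell$ may meet $\mathcal{N}_{\ell}^1$ in a single point). The clean fix is to order the $a_i$ along $\ell$ and take $s$ to be the segment between the two extreme ones; its endpoints then lie on $\mathcal{N}_{\ell}$ (on two possibly different components), which is exactly what the definition of shortcut requires.
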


 Atallah and Bajaj \cite{AB87} designed an algorithm for line stabbing simple objects in the plane which, for our set of $k$ convex hulls,  runs in $O(k \log k)$ time. Hence, this is the complexity time of deciding  whether ${\rm scn} (\mathcal{N}_{\ell})=1$ for non-connected plane Euclidean networks $\mathcal{N}$. Nevertheless, the computation of the shortcut number in general is a much more complex computational problem. Indeed, consider the following problem:

\,

\,

\noindent {\sc \small{Short-Cut-Number:}}
 \emph{Given a plane Euclidean network (not necessarily connected) $\mathcal{N}$ with $n$ vertices and a natural number $t$, to decide whether ${\rm scn} (\mathcal{N}_{\ell})\leq t$ or not.}

\,

\begin{theorem}
{\sc \small{Short-Cut-Number}} is an NP-complete problem.
\end{theorem}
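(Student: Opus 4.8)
The plan is to prove membership in NP first and then hardness via a polynomial-time reduction from a known NP-complete geometric covering/stabbing problem.

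\textbf{Membership in NP.} The subtle point is that shortcut sets have real-valued endpoints, so we cannot naively take a shortcut set as a certificate. The plan is to argue that whenever $\mathrm{scn}(\mathcal{N}_\ell)\le t$, there is an optimal shortcut set whose segment endpoints have coordinates that are algebraic numbers of bounded degree, described by polynomially many bits. This follows by the same flavour of argument used for the optimal shortcut set: the diameter of $\mathcal{N}_\ell\cup\mathcal{S}$ is a piecewise-algebraic function of the $2t$ endpoints, and an optimal configuration lies at a vertex of the arrangement of the defining curves (the conics/convex-set boundaries appearing in the proof of Theorem~\ref{theorem:algorithm_shortcut}, together with the region boundaries of Proposition~\ref{theorem:algorithm_shortcut-simple}). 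Such a vertex is the solution of a bounded system of polynomial equations of bounded degree, hence has a polynomial-size representation. Given such a certificate, one verifies in polynomial time that each $s_i$ has its endpoints on $\mathcal{N}_\ell\cup\{s_1,\dots,s_{i-1}\}$ and, using Lemma~\ref{lemma:algorithm-diameter} applied to $\mathcal{N}_\ell\cup\mathcal{S}$ (which is again a plane Euclidean network, after adding the crossing points as vertices), that $\mathrm{diam}(\mathcal{N}_\ell\cup\mathcal{S})<\mathrm{diam}(\mathcal{N}_\ell)$; for non-connected $\mathcal{N}$ one simply checks that $\mathcal{N}_\ell\cup\mathcal{S}$ is connected.

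\textbf{NP-hardness.} The natural route is to reduce from the problem of stabbing a family of disjoint segments (or axis-parallel segments) in the plane with the minimum number of lines, which is known to be NP-hard; equivalently one can reduce from {\sc Point Cover by Lines} / covering points with $t$ lines. The construction exploits the non-connected version: given an instance consisting of disjoint objects $O_1,\dots,O_m$, build a network $\mathcal{N}$ whose locus has $m$ connected components, the $i$-th being (a short polygonal approximation of) $O_i$, placed so that a segment is a shortcut connecting several components exactly when the corresponding line stabs the corresponding objects. By the Proposition preceding the theorem, $\mathrm{scn}(\mathcal{N}_\ell)=1$ iff one stabbing line hits all components; more generally, a shortcut set of size $t$ that renders $\mathcal{N}_\ell$ connected corresponds (after pruning redundant segments and straightening) to $t$ lines whose union stabs every component, with the small gadget components forcing each inserted segment to behave essentially like a full stabbing line rather than a short local chord. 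One then shows $\mathrm{scn}(\mathcal{N}_\ell)\le t$ iff the $m$ objects can be stabbed by $t$ lines, which gives the reduction.

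\textbf{Main obstacle.} The hard part will be the hardness direction, specifically forcing inserted segments to act like stabbing lines: a shortcut set may use segments with endpoints on previously inserted segments, and a segment need only reduce the diameter, not connect everything in one stroke, so one must design the component gadgets and their placement so that (i) no ``cheap'' local segment can merge two components unless the stabbing-line structure is present, and (ii) chaining segments through earlier segments does not create unintended shortcuts — e.g. by placing components in convex position along candidate line directions and making each component so small and so far from the others that any connecting segment is, up to negligible error, a chord of the stabbing line it determines. Handling the continuous freedom of the endpoints while keeping the correspondence with the discrete line-cover instance tight (both in the ``only if'' pruning argument and in the ``if'' construction of segments from lines) is where the technical care concentrates; the NP membership argument, by contrast, is routine once the bounded-description-size claim for optimal configurations is granted.
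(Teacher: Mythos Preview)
Your Point Cover by Lines route is exactly the one the paper takes, but you make the reduction look much harder than it is. The paper's network has \emph{no edges at all}: it is simply the Megiddo--Tamir point configuration (the $nm^2$ grid points for the $n$ variables together with the $m$ clause points). An isolated point can only be absorbed into the growing network if some inserted segment passes \emph{through} it, so any shortcut set of size $k$ automatically yields $k$ lines covering every point---no gadgets, no ``forcing segments to act like stabbing lines,'' and no difficulty with continuous endpoint freedom. The ``main obstacle'' you identify evaporates once you choose zero-dimensional components rather than segment- or polygon-shaped ones.

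One point you do not raise, and which explains why the paper reduces from 3SAT directly (reproducing the Megiddo--Tamir construction) rather than invoking Point Cover as a black box: for the converse direction---turning $nm$ covering lines into a shortcut set of $nm$ segments---the segments must form a \emph{connected} set, which is not automatic for an arbitrary line cover. The specific Megiddo--Tamir instance, together with the assumption that the clause--variable incidence graph is connected, ensures that the $nm$ lines associated with a satisfying assignment share enough points (the clause points) for their union to be connected.

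Your NP-membership discussion is actually more careful than the paper's, which simply asserts that given a candidate $\mathcal{S}$ one checks $|\mathcal{S}|\le t$ and computes $\mathrm{diam}(\mathcal{N}_\ell\cup\mathcal{S})$ via Lemma~\ref{lemma:algorithm-diameter}, without addressing how real-valued endpoints are to be encoded as a certificate.
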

\begin{proof}
Clearly {\sc Short-Cut-Number} is in NP since, by Lemma \ref{lemma:algorithm-diameter}, one can check in polynomial time whether $|{\cal S}|\leq t$ and ${\rm diam}(\mathcal{N}_{\ell}\cup {\cal S})\leq t$ for a given set of segments ${\cal S}$ and a network $\mathcal{N}$.
 On the other hand, we can reduce 3--satisfiability (3SAT) to {\sc Short-Cut-Number} by mimicking the reduction given in \cite{MT82} of 3--satisfiability to the Point Covering Problem. For the sake of completeness, we summarize here the main steps of that reduction.

 The input of the Point Covering Problem is a set of points in the plane with rational coordinates, and one has to find a collection of straight lines of minimum cardinality such that each point lies on at least one of the lines.

Let $\phi $ be an instance of 3SAT with $m$ clauses and $n$ variables. Without loss of generality, it may be assumed that the bipartite graph of variables and clauses associated to $\phi$ is connected.  The following properties establish the reduction of \cite{MT82}.
%Now, we are going to give the following point set:
\begin{enumerate}
\item Each clause $c_j$ is represented by a point $P_j$.
\item Each variable $x_i$ is represented by a grid of $m^2$ points $P^i_{kl}$ ($1 \leq k,l \leq m$).
\item For each $i$ ($i=1, \ldots , n$) and $j$ ($j=1, \ldots , m$), the points $P^i_{1j}, \ldots ,P^i_{mj}$ lie on a straight line denoted by $L_{ij}$, and  the points $P^i_{j1}, \ldots ,P^i_{jm}$ lie on a straight line denoted by $R_{ij}$.
\item Except for the lines defined in the item above, no other straight line of the plane contains more than two points of the point sets defined in $(1)$ and $(2)$.
\item For every $j$ ($j=1, \ldots , m$), the point $P_j$ lies on the line $L_{ik}$ if and only if $j=k$ and the positive literal $x_j \in c_j$ and  $P_j$ lies on the line $R_{ik}$ if and only if $j=k$ and the negative literal $\bar{x_j} \in c_j$.
\end{enumerate}
In our case the whole collection of points given in $(1)$ and $(2)$ is a plane Euclidean network without edges $\mathcal{N}_{\ell}(\phi)$ whose
diameter is infinite. Thus, the shortcut number of $\mathcal{N}_{\ell}(\phi)$ is the minimum number of segments that have to be inserted to $\mathcal{N}_{\ell}(\phi)$ to connect it in a unique connected component; this gives a finite diameter.

 A unique connected component can be obtained if and only if the entire collection of vertices of $\mathcal{N}_{\ell}(\phi)$ can be covered. As in \cite{MT82}, we can see that $\phi$ is satisfiable if and only if the entire collection of points given in $(1)$ and $(2)$ above can be covered by $nm$ lines.
\end{proof}

\subsection{Polygons}

In our setting, a \emph{polygon} $P$ is the locus of a plane Euclidean cycle.

\begin{proposition}\label{prop:polygon}
No polygon $P$ admits a simple shortcut, and moreover:
\begin{enumerate}
\item[(i)]  ${\rm scn} (P)=2$ when $P$ is convex.
\item[(ii)] ${\rm scn} (P)=1$ when $P$ is non-convex. In this case, there always exists a shortcut
with at least one vertex of $P$ as endpoint.
\end{enumerate}
\end{proposition}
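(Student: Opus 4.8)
The three assertions rest on one observation: if $P$ is the locus of a cycle of perimeter $L$, then the distance in $P$ between two points is the length of the shorter of the two arcs joining them, so ${\rm ecc}(p)=L/2$ for every $p\in P$ and ${\rm diam}(P)=L/2$, attained exactly by the pairs that bisect the perimeter. I would first dispose of the simple-shortcut claim. Let $s=pp'$ be a segment with both endpoints on $P$ whose open part is disjoint from $P$ (if $s\subseteq P$ then $P\cup s=P$, so this is the only case to rule out). Then $P\cup s$ is a theta-graph: the two boundary arcs $A_1,A_2$ joining $p$ to $p'$, together with $s$, and $|A_i|>|s|$. Let $m_i$ be the midpoint of $A_i$. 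One checks $d_{P\cup s}(m_i,p)=d_{P\cup s}(m_i,p')=|A_i|/2$, and since any $m_1$-$m_2$ path must pass through $p$ or through $p'$, it has length at least $d_{P\cup s}(m_1,p)+d_{P\cup s}(p,m_2)=|A_1|/2+|A_2|/2=L/2$ (or the same with $p'$), which the arc route $m_1\to p\to m_2$ attains. Intuitively, $s$ only shortens paths that actually traverse it from $p$ to $p'$, and this pair has no reason to. Hence ${\rm diam}(P\cup s)\ge {\rm diam}(P)$, so $s$ is not a shortcut.

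For (i), the bound ${\rm scn}(P)\ge 2$ is then immediate: when $P$ is convex, every segment with endpoints on $P$ is either contained in $P$ or is a chord meeting $P$ only at its two endpoints, i.e. exactly a simple-shortcut candidate, so no single segment decreases the diameter. That ${\rm scn}(P)$ is finite follows from Theorem \ref{characterization}, since a genuine convex polygon has perimeter strictly larger than twice its Euclidean diameter, i.e. ${\rm diam}(CH(P))<{\rm diam}(P)$. For the matching upper bound I would exhibit two chords $s_1,s_2$, taken from (or arbitrarily close to) perimeter-bisecting pairs and interleaved along $P$ so that they cross, and prove ${\rm diam}(P\cup s_1\cup s_2)<L/2$: the crux is to show that every perimeter-bisecting pair of points admits a route of length $<L/2$ through $s_1$, or through $s_2$, or through both via their crossing point; the remaining pairs, already at distance $<L/2$ in $P$, then stay below $L/2$ by continuity of ${\rm ecc}$ on the compact set $P\cup s_1\cup s_2$.

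For (ii), non-convexity gives an edge $uu'$ of $CH(P)$ with $u,u'\in V(P)$ spanning a pocket whose bounding sub-arc of $P$ is strictly longer than $uu'$. The shortcut would be a single segment having the vertex $u$ as one endpoint and cutting across this pocket; because its open part leaves the polygon and re-enters it, the segment re-crosses $P$ and is therefore non-simple, contributing several chords to $P\cup s$ — which is why one segment can do here what one chord cannot. I would choose it so that it simultaneously shortens the geodesics of every perimeter-bisecting pair, conclude ${\rm diam}(P\cup s)<L/2$, and note that ${\rm scn}(P)\ne 0$ again follows from Theorem \ref{characterization}; the required ``vertex as endpoint'' is built into the construction.

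The first assertion and the lower bound in (i) are short. The real obstacle is the two upper bounds, most of all (i): one must control ${\rm ecc}$ over all of $P$ rather than over a finite list of candidate pairs, and track how the crossing point of $s_1,s_2$ is used; in (ii) the analogous difficulty is certifying that one non-simple shortcut beats the value $L/2$ on every diametral pair at once. Both verifications should follow the pattern of the compactness argument already used in Section \ref{sec:upper}.
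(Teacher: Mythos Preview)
Your argument that no polygon admits a simple shortcut, and the consequent lower bound ${\rm scn}(P)\ge 2$ in the convex case, are correct and coincide with the paper's. Your plan for (ii) is also essentially the paper's: anchor a segment at a convex-hull vertex $u$ bounding a pocket and let it re-enter $P$ near the other pocket vertex; the paper then finishes with a short case analysis over the positions of an arbitrary diametral pair relative to the pocket (their Figure~\ref{fig:proofL3}), which is exactly the verification you defer.

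The genuine gap is in your upper-bound construction for (i). Two crossing chords with endpoints at (or near) perimeter-bisecting pairs need not reduce the diameter. Take the regular hexagon with vertices $v_0,\dots,v_5$ (edge length $1$, perimeter $6$, ${\rm diam}(P)=3$) and the antipodal chords $s_1=v_0v_3$, $s_2=v_1v_4$; they cross at the centre $O$ and $|Ov_i|=1$ for the four endpoints involved. For the antipodal pair $(v_2,v_5)$ every route through $s_1$, $s_2$, or $O$ has length at least $1+1+1+1=4$, so $d_{P\cup s_1\cup s_2}(v_2,v_5)=3={\rm diam}(P)$ and $\{s_1,s_2\}$ is not a shortcut set. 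Shifting the chord endpoints to edge midpoints (e.g.\ $s_1$ from the midpoint of $v_0v_1$ to that of $v_3v_4$, and $s_2$ from the midpoint of $v_1v_2$ to that of $v_4v_5$) fails for the same reason on the pair of midpoints of $v_2v_3$ and $v_5v_0$. So the ``two long crossing chords'' idea is not just incomplete, it is wrong as stated.

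The paper's construction for (i) is both simpler and avoids this pitfall: take two \emph{consecutive} vertices $v,v'$ of $P$ and insert two arbitrarily short corner-cutting segments $s,s'$, each with its endpoints on the two edges incident to the corresponding vertex. For any antipodal pair $(p,p^*)$, at least one of the two $p$--$p^*$ arcs passes through $v$ in its interior or the other passes through $v'$ in its interior, so one of $s,s'$ strictly shortens that arc; and because $s,s'$ can be made as short as we like, ${\rm ecc}(q)<{\rm diam}(P)$ for every $q\in s\cup s'$. This local ``corner shaving'' is the missing idea.
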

\begin{proof}
If the two endpoints of a segment $s$ are the only intersection points with a polygon $P$, then $s$ splits $P$ into two paths whose midpoints are diametral points of $P$ and also of $P\cup s$. This implies that ${\rm diam}(P)={\rm diam}(P\cup s)$. Hence $P$ has no simple shortcut, and moreover, ${\rm scn} (P)\geq 2$ when $P$ is convex. In this case, we take two consecutive vertices of $P$ and add two segments $s$ and $s'$, each of which is very close to one of the vertices (this means that its length can be taken as small as desired) and its two endpoints are, respectively, on the two incident edges with the corresponding vertex. Clearly, either $s$ or $s'$ shortens at least one of the two paths connecting any two diametral vertices of $P$ (recall that ${\rm diam}(P)$ is attained by two vertices when $P$ is convex). Further, by taking the length of those segments small enough,  ecc$(p)<{\rm diam}(P)$ for $p\in s\cup s'$. Therefore, ${\rm diam}(P\cup \{s,s'\})< {\rm diam}(P)$, which proves statement (i).

 When $P$ is non-convex, there are two vertices $u,v\in V(P)$ such that $uv$ is not an edge of $P$ but it is a segment on the border of its convex hull $CH(P)$. The $u$-$v$ path contained in the interior of $CH(P)$ is called the {\em $u$-$v$ pocket} of $P$. Consider a segment $s$ that has vertex $u$ as an endpoint and intersects the $u$-$v$ pocket, say in a point $r'$ (see Figure \ref{fig:proofL3}). We can assume $r'$ to be very close to $v$, i.e., the length of the segment $r'v$ is some $\varepsilon>0$; the other endpoint of $s$, denoted by $r$, is located outside the pocket. As Figure~\ref{fig:proofL3} shows, given diametral points $p,q\in P$, segment $s$ shortens one of the $p$-$q$ paths in $P$; the figure distinguishes three cases according to the possible positions of $p$ and $q$.  Moreover, by taking $\varepsilon$ small enough, ecc$(z)<{\rm diam}(P)$ for every $z\in s$. Hence, $s$ is a shortcut  and so statement (ii) follows.\end{proof}

\begin{figure}[ht]
\begin{center}
\begin{tabular}{ccccccc}
  \includegraphics[width=4cm]{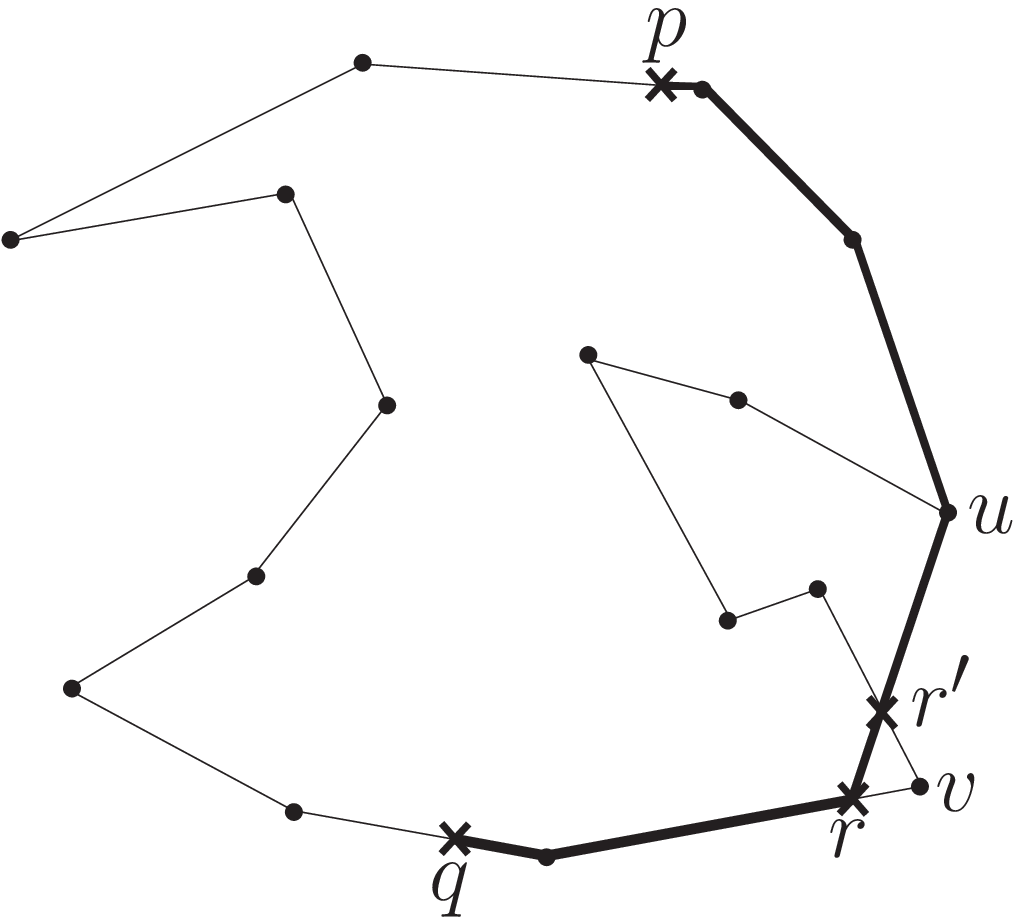}&&&
  \includegraphics[width=4cm]{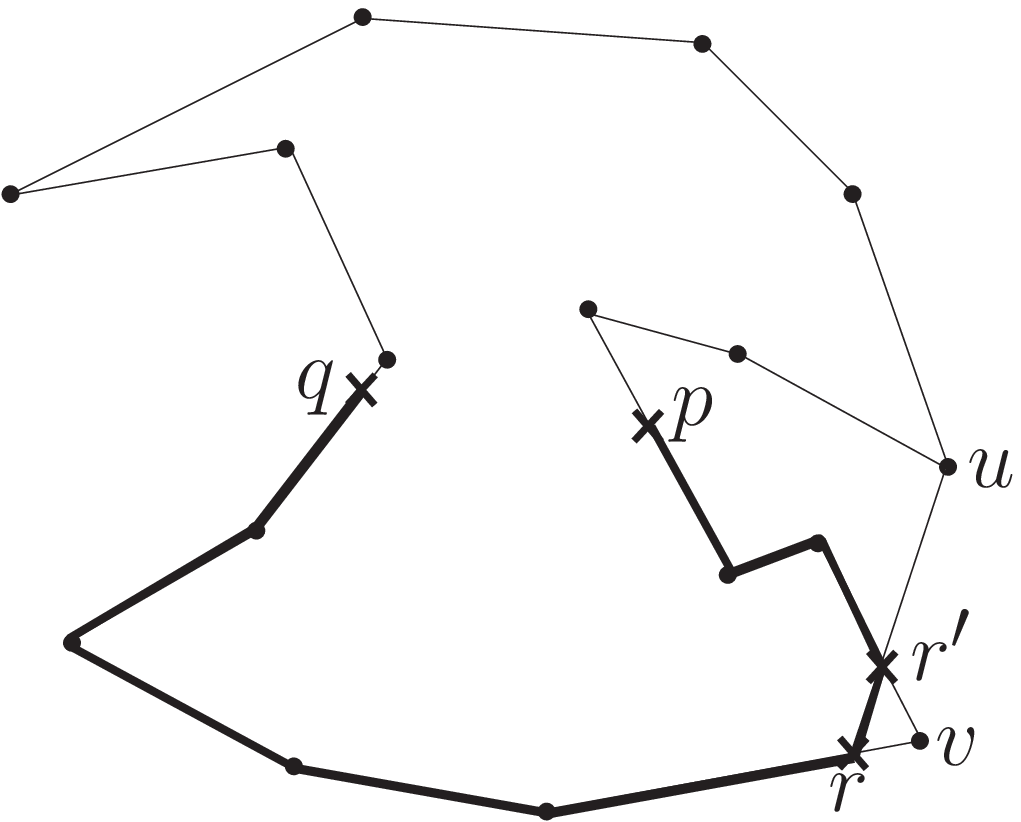}&&&
  \includegraphics[width=4cm]{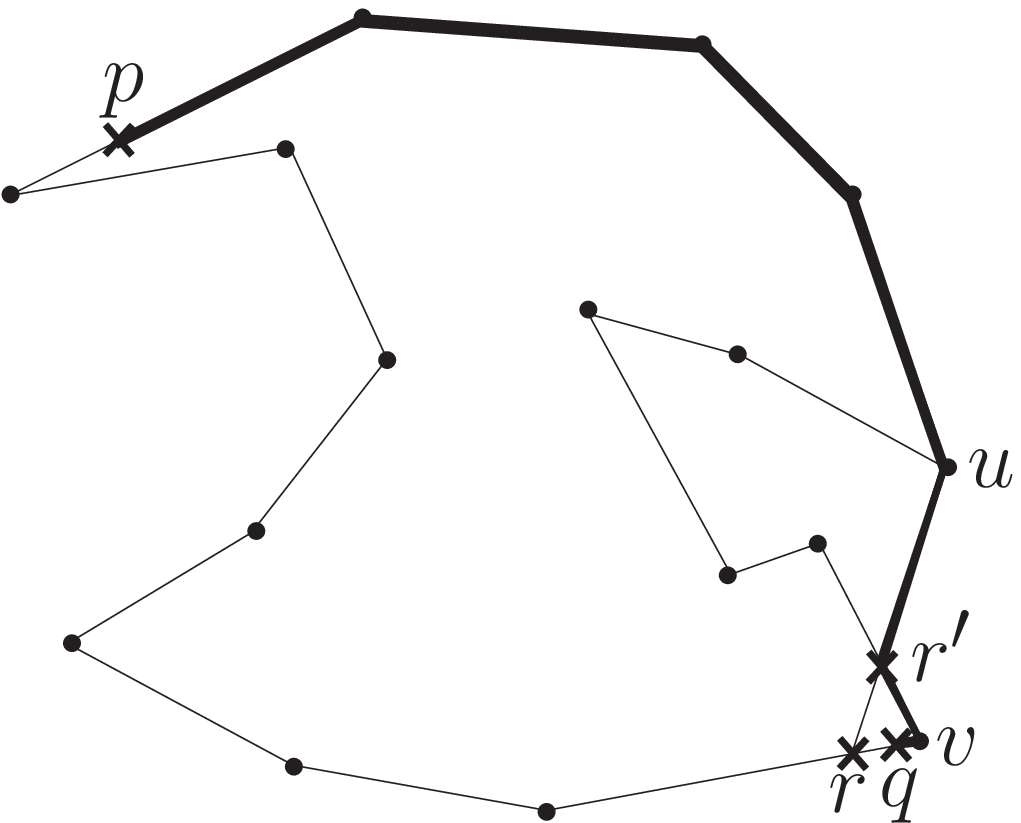}\\
(a)&&&(b)&&&(c)
\end{tabular}
\end{center}
\caption{Segment $s$ shortens one of the $p$-$q$ paths in $P$: (a) $p$ and $q$ are on the $u$-$r$ path $P_1$ that does not pass through $v$, (b) $q\in P_1$ and $p$ is on the $u$-$r'$ path that does not go through $v$, (c) one of the points is on the wedge determined by $r, v$ and $r'$.
}\label{fig:proofL3}
\end{figure}

Not all triangulations of polygons admit a shortcut; Figure~\ref{fig:proofL3yK4} illustrates an example that requires two segments to decrease the diameter. However, it seems that most polygon triangulations have shortcut number equal to one. One can check this, for example, for fan triangulations of convex polygons (it suffices to add a sufficiently small segment that intersects all incident edges with the apex of the triangulation).

\begin{figure}[ht]
\begin{center}
\begin{tabular}{c}
  \includegraphics[width=3cm]{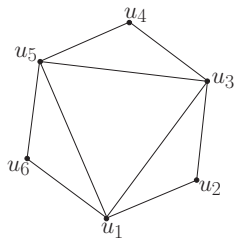}
\end{tabular}
\end{center}
\caption{Polygon triangulation with shortcut number  2 (the pairs $\{u_1,u_4\}$, $\{u_2,u_5\}$ and $\{u_3,u_6\}$ are diametral).
}\label{fig:proofL3yK4}
\end{figure}

\begin{remark}
 As it was explained in the Introduction, De Carufel et al.~\cite{DCGMS15} consider shortcuts for cycles that are always simple. Thus, their Lemma 3.1 says that no polygon admits a simple shortcut as we also prove in Proposition \ref{prop:polygon}. Since our definition of shortcut set differs from their shortcuts, we have included Proposition \ref{prop:polygon} to show how to deal with non-simple shortcuts, which is the content of statement (ii). The other statements are included for completeness.
\end{remark}

\subsection{The complete graph $K_4$}

Non-trivial Euclidean networks obtained from planar embeddings of complete graphs $K_n$ only appear for $n=3$ and $n=4$. As an abuse of notation, let $K_3$ and $K_4$ denote those Euclidean networks, and $(K_3)_{\ell}$ and $(K_4)_{\ell}$ their locus. For $n=3$,
Proposition~\ref{prop:polygon} yields ${\rm scn}((K_3)_{\ell})=2$.

\begin{proposition}\label{prop:K4} The shortcut number of $(K_4)_{\ell}$ is equal to 1.
\end{proposition}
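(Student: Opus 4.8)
The plan is to exhibit an explicit shortcut for $(K_4)_\ell$, namely a single sufficiently small segment placed near one of the four vertices, and to verify that it strictly decreases the diameter. First I would analyze the geometry of $(K_4)_\ell$: a planar embedding of $K_4$ has one vertex, say $u_0$, inside the triangle formed by the other three vertices $u_1,u_2,u_3$, and all six edges are present. I would compute $\mathrm{diam}((K_4)_\ell)$ via Lemma~\ref{lemma:algorithm-diameter}, checking the three candidate types of diametral pairs (two vertices; two interior points of non-pendant edges; pendant vertex and interior point — the last type being vacuous here since $K_4$ has no pendant edges). Since every pair of vertices is joined by an edge, $d(x,y)=d_e(x,y)$ for $x,y\in V(K_4)$, so no pair of vertices can be diametral unless $(K_4)_\ell$ equals a segment (which it does not); hence all diametral pairs consist of interior points of two distinct edges, and by Theorem~\ref{characterization} such a pair cannot lie on a common segment contained in $(K_4)_\ell$, so $(K_4)_\ell$ does admit a shortcut set and $\mathrm{scn}((K_4)_\ell)\ge 1$.

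Next I would show $\mathrm{scn}((K_4)_\ell)=1$ by picking the central vertex $u_0$, which has degree $3$, and inserting one segment $s$ that crosses all three edges $u_0u_1,u_0u_2,u_0u_3$ very close to $u_0$. The key point is that the three edges at $u_0$ emanate into the plane so that consecutive pairs span angles summing to $2\pi$; one cannot have a single line crossing all three near $u_0$ unless the three angles permit it, but in fact for the interior vertex of an embedded $K_4$ one can always choose a line crossing the two edges that bound the largest angle (which is $>\pi$, since the three angles at $u_0$ sum to $2\pi$) — more carefully, one checks that there is a line transversal to all three segments in a small neighborhood of $u_0$ precisely because $u_0$ lies in the interior of the convex hull, so it is not an extreme point and every direction-pair configuration around it can be ``cut off''. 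Actually the cleanest route mirrors the proof of Theorem~\ref{characterization}: among the three half-plane tests there, at least one yields a wedge of angle $<\pi$ at $u_0$ whose bounding edges, when crossed by a short segment $s$, shortens every path using two of those edges; since $u_0$ is interior, one can even arrange $s$ to cross all three incident edges. Then $s$ shortens every shortest path that passes through $u_0$ using two of its incident edges.

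Then I would argue that this $s$ actually reduces the diameter. Every diametral pair $p,q$ lies on two distinct edges of $K_4$; I would show that at least one shortest $p$-$q$ path passes through $u_0$ (using two of its incident edges), or else that the distance $d(p,q)$ is already realized by a path avoiding $u_0$ along the triangle $u_1u_2u_3$ — but in the latter case I must rule out that $p,q$ is diametral, which follows because the outer-triangle distances are dominated by going through $u_0$ when $u_0$ is sufficiently central; for a general embedding one handles this by noting that if some diametral shortest path avoids $u_0$ then by Lemma~\ref{lemma:2shortest} there are two shortest $p$-$q$ paths, and one can check both cannot simultaneously avoid being shortened by a segment crossing the appropriate edges near $u_0$. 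The remaining requirement is $\mathrm{ecc}(z)<\mathrm{diam}((K_4)_\ell)$ for all $z\in s$, which holds automatically by taking $|s|$ small enough, exactly as in the proof of Theorem~\ref{characterization}: as $s\to u_0$, $\mathrm{ecc}(z)\to\mathrm{ecc}(u_0)$, and $\mathrm{ecc}(u_0)<\mathrm{diam}((K_4)_\ell)$ since $u_0$ is a central vertex not belonging to any diametral pair.

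\textbf{Main obstacle.} The delicate step is verifying that the single small segment $s$ near $u_0$ shortens \emph{every} diametral pair at once — i.e., that for each diametral pair $(p,q)$, \emph{some} shortest $p$-$q$ path routes through two edges incident to $u_0$ that $s$ crosses. This requires a case analysis of where diametral pairs can sit on the six edges and, for a general (not regular) planar embedding of $K_4$, checking that a single wedge at $u_0$ captures all of them; one may need to argue that $s$ can be chosen to cross all three edges at $u_0$ (possible since $u_0$ is an interior point of $CH((K_4)_\ell)$), which then handles every pair of edges through $u_0$ simultaneously. If even that does not suffice for some skewed embedding, the fallback is to combine the structure of $(K_4)_\ell$ with Proposition~\ref{theorem:algorithm_shortcut-simple}/Theorem~\ref{theorem:algorithm_shortcut} applied to the concrete finite data, but I expect the direct near-$u_0$ construction to go through for all embeddings.
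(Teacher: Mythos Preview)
Your plan has a genuine gap at its core: you place the segment $s$ near the interior vertex $u_0$ and claim one can arrange $s$ to cross all three incident edges there because $u_0$ lies in the interior of the convex hull. This is false for every planar drawing of $K_4$. The three edges $u_0u_1,u_0u_2,u_0u_3$ bound the three inner triangular faces at $u_0$, so each of the three angles at $u_0$ is an interior angle of a non-degenerate triangle and hence strictly less than~$\pi$. Now a line $L$ near $u_0$ meets the ray $u_0u_i$ precisely when the direction toward $u_i$ points into the open half-plane of $L$ not containing $u_0$; for all three directions to do this simultaneously they would have to lie in a common open half-plane of directions, which forces one of the three angles at $u_0$ to exceed~$\pi$. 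So any straight segment near $u_0$ crosses at most two of the three incident edges, and your ``cross all three'' step cannot be executed. Crossing only two does not rescue the argument either: if the diametral point $q$ lies on the uncrossed edge $u_0u_k$, then every shortest $p$--$q$ path that routes through $u_0$ uses $u_0u_k$, and $s$ does not shorten it (nor does $s$ touch the alternative shortest path through the outer endpoint of $u_0u_k$, which avoids $u_0$ entirely).

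The paper fixes exactly this by placing $s$ near an \emph{outer} vertex, say $u_1$, where the situation is reversed: since $u_1$ is on the boundary of $CH((K_4)_\ell)$, all three incident edges lie in one open half-plane, and a single short segment can cross them all. The paper first shows that any diametral pair $(p,q)$ has $p$ on the outer triangle and $q$ on an inner edge, and then applies Lemma~\ref{lemma:2shortest} to both the edge carrying $p$ and the edge carrying $q$ to deduce that each outer vertex lies on some shortest $p$--$q$ path; in particular there is always a shortest path through $u_1$ using two of its incident edges, which $s$ then shortens. Your handling of the eccentricity condition for points of $s$ is correct and matches the paper.
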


\begin{proof}
Let $V(K_4)=\{u_1,u_2,u_3,u_4\}$ where $u_4$ lies in the interior of the triangle  $T$ of vertices $u_1,u_2,u_3$. Let per$(T)$ denote the perimeter of $T$. It follows that ${\rm diam}((K_4)_{\ell})\geq {\rm per}(T)/2$ since there is always a point $w\in u_2u_3$ such that $d(u_1,w)={\rm per}(T)/2$. Also, it holds that $d(u_1,u_4)+d(u_4,u_2) > d(u_1,u_2)$ and $d(u_1,u_4)+d(u_4,u_3) > d(u_1,u_3)$, and so there is a point $z\in u_1u_4$ sufficiently close to $u_1$ satisfying that $d(z,w)=d(z,u_1)+d(u_1,w)$. This implies that ${\rm diam}((K_4)_{\ell})> {\rm per}(T)/2$; otherwise one would have $d(z,w)=d(z,u_1)+d(u_1,w)=d(z,u_1) + {\rm diam}((K_4)_{\ell})$.
Therefore, given two diametral points $p,q$ in $(K_4)_{\ell}$, we can assume $p\in T$ and $q\notin T$.  Without loss of generality, suppose that $p\in u_2u_3$ and $q\in u_1u_4$ (see Figure~\ref{fig:proofL3yK4(2)}).

By Lemma~\ref{lemma:2shortest}, there exist at least two shortest $p$-$q$ paths in $(K_4)_{\ell}$, one of them containing vertex $u_3$ and the other containing vertex $u_2$.
 Further,  one of those paths contains vertex $u_1$. Hence,  every vertex among $u_1,u_2,u_3$ belongs to a certain shortest $p$-$q$ path. Thus,  a
segment $s$ placed sufficiently close to $u_1$ (it also applies for $u_2$ and $u_3$) and intersecting all incident edges with $u_1$ is a shortcut for $(K_4)_{\ell}$. Observe that $s$ must be located close to $u_1$ to guarantee that ecc$(z)<{\rm diam}((K_4)_{\ell})$ for every $z\in s$.
\end{proof}

\begin{figure}[ht]
\begin{center}
\begin{tabular}{c}
   \includegraphics[width=4.5cm]{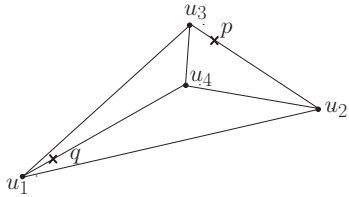}\\
\end{tabular}
\end{center}
\caption{Diametral points $p,q$ on a planar embedding of $K_4$.
}\label{fig:proofL3yK4(2)}
\end{figure}

\section{Concluding remarks}\label{sec:CCRR}

The concept of shortcut set is natural in the context of Euclidean networks, and we believe that its study is of interest not only from the theoretical point of view but also for the already mentioned applications of those networks to road networks, robotics, telecommunications networks, etc. This paper presents the first approach to its study for general plane Euclidean networks. As main results we highlight that, while the problem of minimizing the size of a shortcut set is NP-complete, plane Euclidean networks whose locus admits a shortcut can be identified in polynomial time.

Our study opens many possibilities in this type of problems, such as its extension to other network parameters or to design methods for computing optimal shortcut sets.  We leave for development in another paper to define shortcut sets for the radius and compare the results with those that we have obtained in this paper for the diameter.

%or if it is possible to decide in polynomial time whether ${\rm scn}(\mathcal{N}_{\ell})=k$  for fixed $k\geq 2$. Further, the extension of our study to other network parameters opens many possibilities in this type of problems. We leave for development in another paper to define shortcut sets for the radius and compare the results with those that we have obtained in this paper for the diameter.

\small

\end{document}